\newtheorem{theorem}{Theorem}
\newtheorem{corollary}[theorem]{Corollary}
\newtheorem{Lemma}      {Lemma} [section]
\newtheorem{Theorem}    [Lemma] {Theorem}
\newtheorem{Corollary}  [Lemma] {Corollary}
\newtheorem{Proposition}[Lemma] {Proposition}
\newtheorem{question}{Question}
\theoremstyle{definition}
\newtheorem{Definition}[Lemma]{Definition}
\newtheorem{Remark}[Lemma]{Remark}
\numberwithin{equation}{section}
\DeclareMathOperator{\Aut}{Aut}
\DeclareMathOperator{\St}{st}
\DeclareMathOperator{\Rist}{rst}
\DeclareMathOperator{\Lc}{L}
\DeclareMathOperator{\Rc}{R}
\DeclareMathOperator{\lcm}{lcm}
\newcommand{\Z}{\mathbb{Z}}
\newcommand{\N}{\mathbb{N}}
\newcommand{\F}{\mathbb{F}}
\newcommand\T{{\mathcal{T}}}
\newcommand{\FF}{\mathcal{F}}
\newcommand{\HH}{\mathcal{H}}
\newcommand{\OO}{\mathcal{O}}
\newcommand{\LL}{\mathcal{L}}
\newcommand{\RR}{\mathcal{R}}
\newcommand{\ol}{\overline}
\keywords{Branch groups, Engel elements}
\subjclass[2010]{20E08, 20F45}
\begin{document}

\title[Engel elements in weakly branch groups]{Engel elements in weakly branch groups}
\author[G.A.\ Fern\'andez-Alcober]{Gustavo A.\ Fern\'andez-Alcober}
\address{Department of Mathematics, University of the Basque Country UPV/EHU, Bilbao, Spain}
\email{gustavo.fernandez@ehu.eus}
\author[M.\ Noce]{Marialaura Noce}
\address{Dipartimento di Matematica, Universit\`a di Salerno, Italy;
Department of Mathematics, University of the Basque Country UPV/EHU, Bilbao, Spain}
\email{mnoce@unisa.it}
\author[G.M.\ Tracey]{Gareth M.\ Tracey}
\address{Department of Mathematical Sciences, University of Bath, Bath BA2 7AL, United Kingdom}
\email{G.M.Tracey@bath.ac.uk}

\thanks{The first two authors are supported by the Spanish Government, grant MTM2017-86802-P, partly with FEDER funds.
The first author is also supported by the Basque Government, grant IT974-16.
The second author is partially supported by the ``National Group for Algebraic and Geometric Structures, and their Applications" (GNSAGA - INdAM). 
The third author acknowledges the EPSRC (grant number 1652316) for their support.}

\maketitle

\begin{abstract}
We study properties of Engel elements in weakly branch groups, lying in the group of automorphisms of a spherically homogeneous rooted tree.
More precisely, we prove that the set of bounded left Engel elements is always trivial in weakly branch groups.
In the case of branch groups, the existence of non-trivial left Engel elements implies that these are all $p$-elements and that the group is virtually a $p$-group (and so periodic) for some prime $p$.
We also show that the set of right Engel elements of a weakly branch group is trivial under a relatively mild condition.
Also, we apply these results to well-known families of weakly branch groups, like the multi-GGS groups.
\end{abstract}


\section{Introduction}

A rapidly developing area of group theory studies the properties of \emph{branch groups}, a special kind of groups acting on spherically homogeneous rooted trees.
These groups, which were first defined by Grigorchuk at the Groups St Andrews conference in Bath in 1997, are generalizations of the famous $p$-groups constructed by Grigorchuk himself \cites{Grig0}, and by Gupta and Sidki \cite{GS}.
Despite their relatively recent introduction, branch groups have appeared in the literature in the past, without being explicitly defined.
For instance, the class of branch groups contains one of the three classes of groups in John Wilson's famous characterisation of just infinite groups \cite{JW}.
This is one of the primary reasons for their study.
Another important motivation for studying branch groups comes from the remarkable properties that some of these groups can possess, like intermediate growth, amenability, the congruence subgroup property, or providing a negative answer to the General Burnside Problem.
In this setting, one can also consider the larger family of \emph{weakly branch groups}, which preserve many of the most interesting features enjoyed by branch groups.
We refer the reader to \cref{sec:automorphisms of trees} for a quick introduction to these classes of groups.

With this motivation in mind, the purpose of this paper is to investigate Engel elements in weakly branch groups.
Given two elements $g$ and $x$ in a group $G$, we define $[g,_n x]$ for all $n\in\N\cup\{0\}$ by means of $[g,_0 x]=g$ and, for $n\ge 1$,
\[
[g,_n x] = [[g,_{n-1} x],x] = [[\hdots[[g,x],x],\hdots],x],
\qquad
\text{(where $x$ appears $n$ times).}
\]
Engel conditions in group theory have to do with the triviality of these iterated left normed commutators.
If $[g,_n x]=1$ for some $n\in\N$, we say that $x$ is \emph{Engel on $g$}, and the smallest such $n$ is the \emph{Engel degree of $x$ on $g$}.
If $x$ is Engel on all elements $g\in G$, we say that $x$ is a \emph{left Engel element} of $G$.
Observe that the Engel degree of $x$ can vary as $g$ runs over $G$, and in principle could be unbounded.
If there is a bound for the Engel degrees of $x$, i.e.\ if there exists $n\in\N$ such that
$[g,_n x]=1$ for all $g\in G$, we say that $x$ is a \emph{bounded left Engel element} of $G$.
We denote by $\Lc(G)$ and $\ol{\Lc}(G)$ the sets of left Engel elements and bounded left Engel elements of $G$, respectively.
On the other hand, if $g\in G$ is such that every $x\in G$ is Engel on $g$, we say that $g$ is a
\emph{right Engel element} of $G$.
\emph{Bounded right Engel elements} are defined in an obvious way.
We write $\Rc(G)$ and $\ol{\Rc}(G)$ for the sets of right Engel elements and bounded right Engel elements of $G$.
Observe that $\Rc(G)\subseteq \Lc(G)^{-1}$ \cite[12.3.1]{rob}, and that obviously
$\ol{\Lc}(G)\subseteq \Lc(G)$ and $\ol{\Rc}(G)\subseteq \Rc(G)$.
In particular, if $L(G)=1$ then all four Engel sets are trivial.

We say that $G$ is an \emph{Engel group} if $L(G)=G$ (or equivalently $R(G)=G$).
On the other hand, if the identity $[g,x,\overset{n}{\ldots},x]=1$ holds for all $g,x\in G$, i.e.\ if every $x\in G$ is a bounded left Engel element with a common bound for all $g\in G$ (or equivalently with the right Engel condition), then $G$ is said to be an \emph{$n$-Engel group}.

In every group $G$, the sets $\Lc(G)$, $\ol{\Lc}(G)$, $\Rc(G)$, and $\ol{\Rc}(G)$ contain some distinguished subgroups, namely the Hirsch-Plotkin radical, the Baer radical, the hypercenter and the
$\omega$-center, respectively.
In his book \textit{A Course in the Theory of Groups}, D.J.S.\ Robinson considers it one of the major goals of Engel theory to find conditions which will guarantee that these four sets of Engel elements coincide with the corresponding subgroups \cite[Section 12.3]{rob}.
For example, Baer proved that this is the case if $G$ satisfies the maximal condition
(see \cite[Satz L$^\prime$]{Baer} or \cite[12.3.7]{rob}); in particular, $\Lc(G)$ coincides with the Fitting subgroup if $G$ is finite.
However, these equalities do not hold in general.
It is then natural to ask whether $\Lc(G)$, $\ol{\Lc}(G)$, $\Rc(G)$, and $\ol{\Rc}(G)$ are always subgroups of $G$, and it was not until recently that the first counterexamples were found.
It is here where branch groups come into play in Engel theory.

Let $\mathfrak{G}$ be the first Grigorchuk group.
This is a branch group acting on the binary tree, introduced by Grigorchuk \cite{Grig0} in 1980.
In 2006 Bludov announced \cite{bludov} that the wreath product $\mathfrak{G}\wr D_8$, with the natural action of $D_8$ on $4$ points, can be generated by Engel elements but is not an Engel group.
In particular, $\Lc(\mathfrak{G})$ is not a subgroup.
This example was never published, but ten years later, Bartholdi \cite{Bartholdi} showed that
\[
\Lc(\mathfrak{G}) = \{ x\in\mathfrak{G} \mid x^2=1 \}
\]
and, as a consequence, that $\Lc(\mathfrak{G})$ is not a subgroup.
To date, it is still an open question whether $\ol{\Lc}(G)$, $\Rc(G)$ and $\ol{\Rc}(G)$ are always subgroups.

The other major question in Engel theory is whether Engel groups are locally nilpotent.
The answer is negative in general, the main example being the so-called Golod-Shafarevich groups.
These groups also provide a negative answer to the General Burnside Problem (GBP), which can be equivalently formulated as the question of whether periodic groups are locally finite.
This resemblance between Engel and Burnside problems, together with the fact that many groups answering GBP in the negative are branch groups, and Bartholdi's result on left Engel elements of the Grigorchuk group, makes it natural to study the behaviour of the Engel sets $\Lc(G)$, $\ol{\Lc}(G)$,
$\Rc(G)$ and $\ol{\Rc}(G)$ in (weakly) branch groups.
This is the specific goal that we are addressing in this paper.

Before proceeding to state our main theorems, let us mention some results in the literature regarding Engel elements in groups of automorphisms of spherically homogeneous rooted trees.
In the aforementioned paper, Bartholdi also proved that, if $G$ is the Gupta-Sidki $3$-group, then
$\Lc(G)=1$.
On the other hand, in \cite{albert}, Garreta and the first two authors proved that again $\Lc(G)=1$ if $G$ is any fractal subgroup of a Sylow pro-$p$ subgroup of the group of automorphisms of the $p$-adic tree satisfying the condition $|G':\St_G(1)'|=\infty$, and in particular if $G$ is non-abelian and has torsion-free abelianization.
Also, Tortora and the second author showed in \cite{MT} that
$\ol{\Lc}(\mathfrak{G})=\Rc(\mathfrak{G})=1$ for the Grigorchuk group $\mathfrak{G}$.
As we next see, the situation in these classes of groups generalises to a great extent to weakly branch groups, which have a tendency to have trivial Engel sets.

Our first main result reads as follows.

\begin{theorem}
\label{BranchTheoremW}
Let $G$ be a weakly branch group.
Then the following hold:
\begin{enumerate}
\item
$\ol{\Lc}(G)=1$.
\item
If the set of finite order elements of $\Lc(G)$ is non-trivial then it is a $p$-set for some prime $p$, and
the rigid stabilizer $\Rist_G(n)$ is a $p$-group for some $n\ge 1$.
\end{enumerate}
\end{theorem}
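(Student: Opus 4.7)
Both parts rest on a common commutator computation. Suppose $x\in G\setminus\{1\}$ moves a vertex $v$; set $u_i:=v^{x^i}$ and take $g\in\Rist_G(v)$. When $u_0,\dots,u_k$ are pairwise distinct, the subtrees rooted at the $u_i$ are disjoint, the conjugates $g^{x^i}\in\Rist_G(u_i)$ pairwise commute, and a short binomial induction yields
\[
[g,_k x] \;=\; \prod_{i=0}^{k}\bigl(g^{x^i}\bigr)^{(-1)^{k-i}\binom{k}{i}},
\]
whose $\Rist_G(v)$-component is $g^{(-1)^k}$. Since $G$ is weakly branch we may choose $g\ne 1$; hence $[g,_k x]\ne 1$ whenever the $\langle x\rangle$-orbit of $v$ has size strictly greater than $k$.

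For part (i), suppose for contradiction that $x\in\ol{\Lc}(G)\setminus\{1\}$ has common Engel bound $n$. The displayed identity forces every $\langle x\rangle$-orbit of a moved vertex to have size at most $n$. If $\mathrm{ord}(x)=\infty$, the order of $x$ on level $\ell$ tends to infinity with $\ell$, yet the lcm of integers $\le n$ is a fixed finite constant; so some orbit eventually exceeds $n$ on a deep level, contradiction. For $\mathrm{ord}(x)=m<\infty$, the approach is orbit amplification: if $v$ has orbit $s\le n$ and $x^s|_{T_v}\ne 1$, then some descendant of $v$ has $\langle x\rangle$-orbit a proper multiple of $s$; iteration produces orbits of size up to $m$ at deep vertices. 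Either an orbit exceeds $n$ outright, or the process terminates at some vertex $w$ with $x^S|_{T_w}=1$ for $S\le n$, in which case the $\Rist_G(w)$-component of $[g,_n x]$ collapses to a fixed non-trivial power of $g$; this forces a uniform exponent bound on $\Rist_G(w)$, which one violates by choosing $g$ inside $\prod_{w'}\Rist_G(w')$ (direct product over descendants of $w$, which is non-trivially large by the weakly branch hypothesis).

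For part (ii), let $x\in\Lc(G)\setminus\{1\}$ have finite order $m$; pick $v$ with $v^x\ne v$ of orbit size $s$ and a non-trivial $g\in\Rist_G(v)$, so that $[g,_k x]=1$ for some $k$. Writing $y:=x^s|_{T_v}$ and reading off the $\Rist_G(v)$-component of this identity modulo the derived subgroup, one obtains an element $\beta_k\in\Z[\langle y\rangle]$, arising from $(1-x)^k$ by projection to the orbit of $v$, which annihilates $\bar g$ in the $\Z[\langle y\rangle]$-module $\Rist_G(v)^{\mathrm{ab}}$. A standard group-ring computation shows $\beta_k$ is nilpotent in $(\Z/e\Z)[\langle y\rangle]$ (with $e=\mathrm{ord}(g)$) only when $e$, $s$, and $\mathrm{ord}(y)$ are all powers of a common prime $p$: if some prime $q\mid e$ is coprime to $s\cdot\mathrm{ord}(y)$, then $\F_q[\langle y\rangle]$ is semisimple and $\beta_k$ is a non-zero non-nilpotent element there. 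Varying $g$ forces $m$ and every element of $\Rist_G(v)$ to be $p$-elements, so $\Rist_G(v)$ is a $p$-group; by spherical transitivity the rigid stabilizers of all vertices at the level $\ell$ of $v$ are $p$-groups, and hence $\Rist_G(\ell)=\prod_w\Rist_G(w)$ is a $p$-group. Any other non-trivial finite-order element of $\Lc(G)$ must be a $p$-element for the same $p$: choose a vertex moved by it at a level deeper than $\ell$, where the rigid stabilizer is already a $p$-group, and run the same argument.

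I expect the main obstacle to lie in the finite-order case of part (i): establishing that the orbit-amplification procedure, together with its terminal residual-identity variant, always forces a genuine contradiction requires careful tracking of how $x^s$ interacts with the direct product structure of $\Rist_G(v)$, ruling out subtle cancellations among the $g^{x^i}$ that would otherwise be consistent with the Engel bound.
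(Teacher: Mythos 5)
Your basic commutator identity is correct, and so is its consequence that a bounded left Engel element with bound $n$ can have no $\langle x\rangle$-orbit of length greater than $n$; your infinite-order case of (i) then runs essentially as in the paper (which packages the same computation as \cref{Comm}). The first genuine gap is at the end of your finite-order case of (i). Your orbit amplification does terminate either in an orbit longer than $n$ or at a vertex $w$ with orbit length $S\le n$ and trivial section of $x^S$ at $w$, and in the latter case one does get a finite exponent bound on $\Rist_G(w)$ (a small slip: the exponent in the $\Rist_G(w)$-component itself, $\sum_{i\equiv 0\,(S)}(-1)^{n-i}\binom{n}{i}$, can vanish, e.g.\ for $S=n=3$, though some other component always carries a non-zero exponent, so the bound survives). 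But you then claim to contradict this bound merely because $\prod_{w'}\Rist_G(w')$ is ``non-trivially large''. An infinite direct product of non-trivial groups can have finite exponent, so size alone contradicts nothing. What is actually required is that rigid stabilizers of a weakly branch group have \emph{infinite exponent}; this is \cref{rist infinite exponent} in the paper, and it is not free --- its proof needs \cref{order in wreath} (in $Y\wr X$ the element $x(y,1,\ldots,1)$ has order exceeding $|x|$) together with a prime-selection argument. Your proposal silently assumes this fact.

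The second, more serious, gap is in part (ii). Reading $[g,_k x]=1$ ``modulo the derived subgroup'' only constrains the image $\bar g$ of $g$ in $\Rist_G(v)^{\mathrm{ab}}$ as an element of a $\Z[\langle y\rangle]$-module. That image may be trivial (if $g$ lies in the derived subgroup) or of much smaller order than $g$, so no annihilator computation for $\bar g$ can show that $g$ itself is a $p$-element --- nor even that it has finite order --- and both are needed, since the conclusion is that all of $\Rist_G(n)$, derived subgroup included, is a $p$-group. You are pushed into the abelianization because for an orbit of length $s<|x|$ the conjugates $g^{x^i}$ with $i\equiv j\pmod s$ land in the same subtree and need not commute. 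The paper's device is designed precisely to avoid this: it passes to a \emph{totally splitting} orbit $\OO$ (supplied by \cref{f->fi}) and the reduced tree $\RR(\OO)$, on which the induced automorphism has order equal to $|\OO|$, i.e.\ a regular orbit; then all conjugates of $g$ lie in pairwise disjoint rigid stabilizers and genuinely commute, giving an honest regular wreath product $\langle g\rangle\wr\langle x\rangle$ by \cref{wreath}, to which Baer's theorem on Engel elements of finite groups and Liebeck's Engel-degree formula apply with no loss of information (\cref{Lieb}). You would need to import that reduction, or an equivalent, to repair (ii); the group-ring argument alone cannot see elements of the derived subgroup.
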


Thus even if weakly branch groups provide examples in which $\Lc(G)$ is not a subgroup, they cannot be used to obtain similar examples for $\ol{\Lc}(G)$.
This result can be interpreted in a similar vein to the fact that weakly branch groups, being residually finite, cannot provide examples of finitely generated infinite groups of finite exponent: the ``problem" in both cases is boundedness.
On the other hand, part (ii) of Theorem A raises the following question.

\begin{question}
Can a weakly branch group $G$ contain left Engel elements of infinite order?
If the answer is negative, then $\Lc(G)$ consists entirely of $p$-elements for some prime $p$.
\end{question}

If instead of weakly branch the group is actually branch, then we have the following stronger version of Theorem A.

\begin{theorem}
\label{BranchTheorem}
Let $G$ be a branch group.
If $\Lc(G)\ne 1$ then $G$ is periodic and there exists a prime $p$ such that:
\begin{enumerate}
\item
$\Lc(G)$ consists of $p$-elements.
\item
$G$ is virtually a $p$-group.
\end{enumerate}
\end{theorem}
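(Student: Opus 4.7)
The plan is to reduce Theorem B to Theorem A(ii) via the following key step: if $G$ is a branch group with $\Lc(G)\neq 1$, then $\Lc(G)$ contains a nontrivial element of finite order. Granting this, Theorem A(ii) produces a prime $p$ such that $\Rist_G(n)$ is a $p$-group for some $n\ge 1$; the branch hypothesis gives $[G:\Rist_G(n)]<\infty$, so $G$ is virtually a $p$-group, which establishes part (ii). In particular $G$ is periodic, hence every element of $\Lc(G)$ has finite order, and the ``$p$-set'' clause of Theorem A(ii) then forces $\Lc(G)$ to consist entirely of $p$-elements, proving part (i).

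For the key step, I would take $x\in\Lc(G)\setminus\{1\}$ and use closure of $\Lc(G)$ under powers together with the finiteness of $G/\Rist_G(n)$ (a consequence of branchness) to pass to a nontrivial power of $x$ lying in $\Rist_G(n)\cap\Lc(G)$; if such a power is ever trivial, $x$ was already torsion and we are done. Writing this element, which I still call $x$, as $x=(x_v)_v$ under the internal direct product $\Rist_G(n)=\prod_{v}\Rist_G(v)$ over the level-$n$ vertices, one observes that for every $g\in\Rist_G(v)$ the iterated commutators $[g,_m x]$ remain inside $\Rist_G(v)$ and coincide with $[g,_m x_v]$, so that each nontrivial section $x_v$ is a left Engel element of $\Rist_G(v)$ acting on the subtree rooted at $v$. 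Because level $n$ is finite, $x$ is torsion if and only if each $x_v$ is, so if $x$ has infinite order, we may select some $x_v$ of infinite order and aim for a contradiction to $x_v\in\Lc(\Rist_G(v))$.

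To derive the contradiction, I would use that an infinite-order tree automorphism must have orbits of unbounded size on deep levels (otherwise some uniform power would fix every vertex and hence be trivial). Picking a vertex $w$ below $v$ whose $\langle x_v\rangle$-orbit has some large size $N$ and a nontrivial $h\in\Rist_G(w)$, the supports of $h,h^{x_v},\ldots,h^{x_v^{N-1}}$ lie in pairwise disjoint rigid vertex stabilizers, and the resulting decomposition of $[h,_m x_v]$ as a product over these disjoint factors shows that $[h,_m x_v]\neq 1$ for every $m<N$. The main obstacle — the technical heart of the argument — is to extend the non-triviality to the ``wrap-around'' regime $m\ge N$, in which the conjugates $h^{x_v^j}$ begin re-entering the single factor $\Rist_G(w)$ and no longer commute with one another inside it. Controlling the resulting commutator word within $\Rist_G(w)$ requires a careful choice of $h$ that exploits the abundance and variety of element orders in $\Rist_G(w)$, which is guaranteed by $G$ being branch (not merely weakly branch) on the subtree below $w$. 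Producing a single $h$ for which $[h,_m x_v]\neq 1$ holds for every $m$ contradicts $x_v\in\Lc(\Rist_G(v))$, forcing $x_v$, and hence $x$, to have finite order.
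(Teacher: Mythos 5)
Your overall architecture is sound and matches the paper's: reduce to showing that $\Lc(G)$ contains a non-trivial element of finite order, then feed this into part (ii) of \cref{BranchTheoremW} and use $|G:\Rist_G(n)|<\infty$ to get that $G$ is virtually a $p$-group, hence periodic, hence that all of $\Lc(G)$ is a $p$-set. But your proof of the key step has two genuine gaps. First, you pass from $x\in\Lc(G)$ to a power $x^k\in\Rist_G(n)$ and assert $x^k\in\Lc(G)$ by ``closure of $\Lc(G)$ under powers''. This is not a known fact: whether $\Lc(G)$ is closed under powers (or even under inversion) is open in general, and nothing in the branch setting supplies it. The paper avoids this entirely by never replacing $f$ by a power \emph{as the Engel element}: it restricts $f$ itself to the reduced tree $\RR(\OO)$ at an $f$-orbit $\OO$ of length $\ell\ge 2$, so that the induced automorphism $h$ sits over the base group of a wreath product $Y\wr X$ with $X$ cyclic of order $\ell$ and is still left Engel on $D=\Phi_{\OO}(\Rist_G(n))$; the power $f^k\in\Rist_G(n)$ (which exists by branchness) is used only as a \emph{test element} of $D$, for which no Engel property is required.

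Second, and more seriously, you leave unresolved exactly the step that carries the whole proof: the ``wrap-around'' regime $m\ge N$. The disjoint-support argument only shows that $x_v$ is not a \emph{bounded} left Engel element --- for each $m$ it produces some $h$ (depending on $m$, via an orbit of length $N>m$) with $[h,_m x_v]\ne 1$ --- whereas contradicting $x_v\in\Lc$ requires a single $h$ that works for every $m$, and the appeal to ``abundance and variety of element orders in $\Rist_G(w)$'' is not an argument. The paper's resolution is part (ii) of \cref{Comm}: if $w$ is left Engel on a non-trivial direct product $D$ inside the base group of $Y\wr X$, then $C_D(w)$ is periodic; this is proved by an induction that tracks the exponents of the components of $[g,_k w]$ through the wrap-around and shows that two adjacent exponents can never vanish simultaneously. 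The contradiction is then obtained not from a cleverly chosen generic $h$, but from the specific infinite-order element $h^k=\Phi_{\OO}(f^k)\in C_D(h)$, i.e.\ from a power of the Engel element itself landing in the rigid stabilizer --- precisely the point where branchness, as opposed to weak branchness, is used. Until you supply an argument of this kind, the infinite-order case, and with it the theorem, remains unproved.
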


Compare the results in Theorem B with the situation in the Grigorchuk group.
In that case, $\Lc(\mathfrak{G})$ consists of all elements of order $2$ in $\mathfrak{G}$, and
$\mathfrak{G}$ is a $2$-group.
On the other hand, the prime $p$ can be arbitrary in (i) and (ii): if $\FF_p$ is the group of $p$-finitary automorphisms of a $p$-adic tree then it is easy to see that $\Lc(\FF_p)=\FF_p$, and this is a $p$-group.
Observe however that, contrary to $\mathfrak{G}$, the group $\FF_p$ is not finitely generated.

\begin{question}
Are there any \emph{finitely generated} (weakly) branch groups for which the set $\Lc(G)$ is non-trivial and consists of $p$-elements for an \emph{odd} prime $p$?
\end{question}

In the following theorem we consider right Engel elements in weakly branch groups under a relatively mild condition.

\begin{theorem}
\label{theoremrightengel}
Let $G$ be a weakly branch group.
If the rigid stabilizer $\Rist_G(n)$ is not an Engel group for any $n\in\N$, then $\Rc(G)=1$.
\end{theorem}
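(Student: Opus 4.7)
I would argue by contradiction: assume $g\in\Rc(G)\setminus\{1\}$, and produce $x\in G$ with $[g,_k x]\neq 1$ for every $k$. As $G$ acts faithfully on $T$, there is a minimal level $m\ge 1$ at which $g$ moves some vertex; then $g$ fixes every vertex of level $m-1$, so some vertex $u$ at level $m-1$ has its children permuted non-trivially by $g$. Pick children $v_0\ne v_1:=v_0^g$ of $u$, and abbreviate $R_i:=\Rist_G(v_i)$; conjugation by $g$ induces an isomorphism $R_0\to R_1$.

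Next I would extract a non-Engel pair from the hypothesis. Since $\Rist_G(m)=\prod_{v\text{ at level }m}\Rist_G(v)$ is a direct product and a direct product is Engel if and only if each factor is, the assumption that $\Rist_G(m)$ is not Engel forces some $\Rist_G(v)$ to be non-Engel; by level-transitivity of $G$ all these factors are pairwise $G$-conjugate, so in particular $R_1$ is not Engel. Fix $y,z\in R_1$ with $[y,_k z]\neq 1$ for every $k\ge 0$.

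Now I would build the test element: using the isomorphism $R_0\to R_1$ given by conjugation by $g$, choose $x_0\in R_0$ with $x_0^g=y^{-1}$, set $x_1:=z$ and $x:=x_0 x_1\in \Rist_G(m)$. Because $\Rist_G(m)\trianglelefteq G$, every $[g,_k x]$ lies in $\Rist_G(m)$, and commutators in the direct product $\Rist_G(m)=\prod_v R_v$ are computed component-wise. A direct calculation, using only that elements of distinct $R_v$'s commute, gives $[g,x]_{v_1}=(x_0^g)^{-1}x_1=yz$, where $[\cdot]_{v_1}$ denotes projection onto $R_1$. Then, using the identity $[u^z,z]=[u,z]^z$, induction on $k$ yields $[g,_k x]_{v_1}=[y,_{k-1} z]^{z}$ for every $k\ge 2$. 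By our choice of $y,z$ the element $[y,_{k-1} z]$ is non-trivial for all $k\ge 1$, and conjugation preserves non-triviality, so $[g,_k x]\ne 1$ for every $k\ge 2$, contradicting $g\in\Rc(G)$.

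The technical heart is the commutator computation in the last paragraph. One must keep track of the supports of each factor of $[g,x]$ among the vertices in the $\langle g\rangle$-orbit of $v_0$: an orbit of length $2$ produces contributions only in $R_0$ and $R_1$, while a longer orbit yields a further contribution in a third factor $R_2=\Rist_G(v_1^g)$ that disappears after one more commutation with $x$. In both regimes the $R_1$-component of $[g,x]$ is the same expression $yz$, so the inductive collapse via $[u^z,z]=[u,z]^z$ proceeds uniformly and is insensitive to the behaviour of $g$ on deeper levels of the subtrees below $v_0,v_1$.
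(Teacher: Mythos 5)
Your argument is correct, and the engine driving it is the same as in the paper: a test element supported on two vertices of a non-trivial $g$-orbit, one component a $g^{\mp 1}$-conjugate of (the inverse of) one member of a non-Engel pair and the other the second member, so that a single commutation with $g$ merges them into $yz$ in one coordinate, after which the identity $[yz,z]=[y,z]^z$ propagates the non-vanishing Engel sequence componentwise. The difference is in the framework rather than the idea. The paper first passes to the reduced tree $\RR(\OO)$ of the whole orbit, writes the induced automorphism as $az$ with $a$ a rooted $d$-cycle, extracts the non-Engel pair from a \emph{section} of $\Rist_H(1)$, and invokes the general formula of \cref{formula for R(G)} for $\psi([y,_k x])$, which records all $d$ components at once. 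You stay inside $G$, take the non-Engel pair directly from a factor $\Rist_G(v_1)$ of the finite direct product $\Rist_G(m)=\prod_v \Rist_G(v)$ (using that a finite direct product of Engel groups is Engel, plus conjugacy of the factors via \eqref{conjugate of rist}), and track by hand the at most three factors $R_0,R_1,R_2$ that carry non-trivial components, noting correctly that the stray $R_2$-contribution dies after one further commutation and in any case never pollutes the $R_1$-coordinate. Your route is slightly more economical, dispensing with both the reduced-tree formalism and \cref{formula for R(G)}; the paper's version buys a reusable closed formula and a cleaner statement when the orbit is treated as a whole. All the auxiliary facts you use (normality of $\Rist_G(m)$, $\Rist_G(v)^g=\Rist_G(g(v))$, componentwise commutators in a direct product, and that $[y,_k z]\ne 1$ for all $k$ once $z$ fails to be Engel on $y$) are sound, so I see no gap.
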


\begin{question}
Is $\Rc(G)=1$ for every \emph{finitely generated} (weakly) branch group?
By Theorem C, this seems closely linked to this other question: can a \emph{finitely generated} (weakly) branch group be Engel?
\end{question}

Again, without finite generation, the group $\FF_p$ shows that the answer is negative in both cases.
Regarding the last question, observe that weakly branch groups cannot satisfy a law
(see \cite[Corollary 1.4]{abert} or \cite{leonov}) and so cannot be $n$-Engel for a fixed $n$.
Thus we are asking whether finite generation makes it impossible for them to be Engel as well.

\vspace{8pt}

As an application of Theorems A, B, and C, we get the following corollary, which provides information about Engel elements in some specific families of weakly branch groups.
The definition of these families is given either in Section \ref{sec:automorphisms of trees} or right before the proof of the corresponding result.

\begin{corollary}
Let $\T$ be a spherically homogeneous rooted tree.
Then the following hold:
\begin{enumerate}
\item
If $G$ is an infinitely iterated wreath product of finite transitive permutation groups of degree at least $2$, then $\Lc(G)=1$.
This applies in particular to the whole group of automorphisms of $\T$, and also to its Sylow pro-$p$ subgroups if $\T$ is a $p$-adic tree, where $p$ is a prime.
\item
If $\FF$ is the group of finitary automorphisms of $\T$, and there are infinitely many levels in which the number of descendants is greater than $2$, then $\Lc(\FF)=1$.
If $\T$ is a $p$-adic tree and $\FF_p$ is the group of $p$-finitary automorphisms of $\T$, then
$\ol{\Lc}(\FF_p)=1$.
\item
If $\HH$ is the Hanoi Tower group then $\Lc(\HH)=1$.
\item
If $G$ is a multi-GGS groups then $\Rc(G)=1$, and if $G$ is furthermore non-periodic then $\Lc(G)=1$.
\end{enumerate}
\end{corollary}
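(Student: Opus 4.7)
The strategy is to deduce each part from Theorems A, B, and C after verifying that the specific family is weakly branch (or branch) and satisfies the requisite hypothesis; in most cases the real work reduces to a single structural observation.

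For parts (i) and (iii), both the infinitely iterated wreath product $W=A_1\wr A_2\wr\cdots$ of finite transitive permutation groups of degree at least $2$ and the Hanoi Tower group $\HH$ are branch groups on their natural trees. The contrapositive of Theorem B says that any non-periodic (weakly) branch group has trivial $\Lc$, so everything reduces to non-periodicity. For $W$, I would construct an adding-machine style automorphism: choose a non-trivial $a_n\in A_n$ for each $n$ and define $g$ recursively so that its root-level action is given by $a_1$ and its first section is the analogous element built from $(A_2,A_3,\dots)$; a straightforward induction on the level shows $g^{m}\ne 1$ for every $m$. For $\HH$, non-periodicity is already in the literature. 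The particular cases $G=\Aut(\T)$ and the Sylow pro-$p$ subgroup claimed in (i) are then special cases.

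Part (ii) splits in two. The full finitary group $\FF$ is visibly periodic, as every finitary automorphism has finite support and hence finite order. However, at each level with more than two descendants $\FF$ contains the full symmetric group on the corresponding set of children, and hence elements of order $q$ for every prime $q\le m$. If infinitely many levels have this property, $\FF$ realises elements of unboundedly many distinct prime orders, ruling out being virtually a $p$-group for any fixed $p$; Theorem B then forces $\Lc(\FF)=1$. For $\FF_p$ on the $p$-adic tree, Theorem B is vacuous, since the introduction already notes $\Lc(\FF_p)=\FF_p$; one instead falls back on Theorem A applied to the weakly branch group $\FF_p$, obtaining $\ol{\Lc}(\FF_p)=1$.

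For part (iv), multi-GGS groups are weakly branch. The statement about $\Lc(G)$ is immediate from the contrapositive of Theorem B: if $G$ is non-periodic then $\Lc(G)$ must be trivial. For $\Rc(G)=1$, the plan is to apply Theorem C, reducing to the claim that no rigid stabilizer $\Rist_G(n)$ of a multi-GGS group is an Engel group. Here I would exploit the standard structural fact that $\Rist_G(n)$ decomposes as a direct product of rigid vertex stabilizers $\Rist_{G,v}$ over the level-$n$ vertices $v$, each of which contains an isomorphic copy of a ``large'' characteristic subgroup of $G$ (e.g.\ $G'$ or a branching subgroup) via the section map below $v$; this produces an explicit pair of elements inside $\Rist_G(n)$ whose iterated commutator never vanishes, so $\Rist_G(n)$ is not Engel. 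The main technical obstacles I anticipate are (a) verifying cleanly that the adding-machine element in (i) has infinite order for an arbitrary sequence of $A_n$, and (b) pinpointing the precise explicit witness to non-Engelness of $\Rist_G(n)$ for multi-GGS groups in (iv); the remaining implications are essentially direct substitutions into Theorems A, B, and C.
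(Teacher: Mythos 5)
Your overall strategy --- reduce each part to Theorems A, B and C by checking non-periodicity, the ``virtually a $p$-group'' condition, or non-Engelness of rigid stabilizers --- is exactly the paper's, and parts (i) and (iii) go through essentially as you describe: the paper builds the same kind of infinite-order element (one non-trivial label along an infinite path) and verifies non-periodicity of $\HH$ directly from $(ab)^{3\ell}=((ba)^{\ell},(ab)^{\ell},(ab)^{\ell})$. The first genuine gap is in part (ii). Your claim that $\FF$ has ``elements of unboundedly many distinct prime orders'' does not follow from the hypothesis: it only guarantees $d_n\ge 3$ for infinitely many $n$, and if $d_n=3$ at every such level (e.g.\ the ternary tree) then every orbit length, hence every element order, has the form $2^a3^b$, so only the primes $2$ and $3$ ever occur. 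Moreover, merely having elements of two distinct prime orders does not preclude being virtually a $p$-group. What is needed, and what the paper proves, is that for both $q=2$ and $q=3$ there are $q$-subgroups of $\FF$ of unbounded order (all labels in $\langle(1\ 2)\rangle$, respectively $\langle(1\ 2\ 3)\rangle$, at a single level with at least $3$ descendants); whichever of $2,3$ differs from $p$ then contradicts the existence of a finite-index normal $p$-subgroup.

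Part (iv) has two further problems. First, Theorem B applies only to \emph{branch} groups, and the multi-GGS group with $\mathbf{E}=\langle(1,\ldots,1)\rangle$ is non-periodic but only weakly branch, so ``the contrapositive of Theorem B'' does not cover it; the paper disposes of this case by quoting $\Lc(G)=1$ from \cite{albert}, which also forces $\Rc(G)=1$ there. Second, for $\Rc(G)=1$ via Theorem C the entire mathematical content is the non-Engel witness, which you explicitly defer as an ``anticipated obstacle.'' The paper supplies it in two steps: a genuine computation (\cref{GGS not Engel}) showing that a suitable power $b^{\lambda}$ fails to be Engel on $a^i$ via a recursive commutator identity, and then the fact that the branching subgroup $K$ ($=G'$ or $\gamma_3(G)$) satisfies $\psi_v(K)=G$ for some first-level vertex $v$ --- proved by a case analysis on whether the defining vector is symmetric --- so that $K$, and hence $\Rist_G(n)\supseteq K\times\cdots\times K$, is not Engel because its homomorphic image $G$ is not. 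Note the direction here: it is the surjection of $K$ \emph{onto} $G$ under a section map, not the containment of a copy of $G'$ inside $\Rist_G(n)$, that transfers non-Engelness. Until these two steps are carried out, part (iv) is not proved.
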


We conclude this introduction by indicating how the paper is organised.
In \cref{sec:automorphisms of trees} we first give some generalities about groups of automorphisms of spherically homogeneous rooted trees, with special emphasis on weakly branch groups.
Then we provide several results regarding orbits of such automorphisms that will be essential later on.
Our approach to the study of Engel elements in weakly branch groups is through the reduction to wreath products.
\cref{sec:engel in wreath} is devoted to a careful analysis of the scenarios that will arise when we apply this kind of reduction.
Finally, in \cref{sec:left engel} we prove Theorems A and B, regarding left Engel elements in weakly branch groups, and then in \cref{sec:right engel} we obtain Theorem C about right Engel elements.
The proof of the applications given in Corollary D is split between these two sections.

\vspace{8pt}

\noindent
\textit{Notation and terminology.}
If $f:X\rightarrow Y$ and $g:Y\rightarrow Z$ are two maps, we write $fg$ for their composition instead of $g\circ f$.
As usual, $S_n$ stands for the symmetric group on $n$ letters.
We denote the direct product of groups $G_1,\ldots,G_n$ by $\prod_{i=1}^n \, G_i$.
Given a group $G$, an element $g\in G$ is a \emph{$p$-element}, where $p$ is a prime, if its order is a power of $p$, and a subset $X$ of $G$ is a \emph{$p$-set} if every element of $X$ is a $p$-element.
Also, if $G$ is finite, $F(G)$ denotes the Fitting subgroup of $G$.

\section{Automorphisms of spherically homogeneous rooted trees}
\label{sec:automorphisms of trees}

In this section, we first give some notation and general facts about groups of automorphisms of a spherically homogeneous rooted tree, and more specifically, about (weakly) branch groups.
For further information on the topic one can see, for example, \cite{branch} or \cite{Grigbook}.
Then we provide some results regarding orbits of automorphisms of such trees that will be needed in the following sections.


Let $\ol{d}=\{d_n\}_{n=1}^{\infty}$ be an infinite sequence of integers greater than $1$, and let $d=d_1$.
We write $\T_{\ol{d}}$ to denote the \emph{spherically homogeneous rooted tree} corresponding to
$\ol{d}$.
This is a rooted tree where all vertices at level $n$ (i.e.\ at distance $n$ from the root) have the same number $d_{n+1}$ of immediate descendants.
If $\ol{d}$ takes the constant value $d$, we write $\T_d$ for $\T_{\ol{d}}$, and we call it the
\emph{$d$-adic tree}.
In order to ease notation, and unless it is strictly necessary to make the sequence $\ol{d}$ explicit, all throughout the paper we will simply write $\T$ to denote an arbitrary spherically homogeneous rooted tree.
Also we will write $V(\T)$ for the set of vertices of $\T$ and, for every $n\in\N$, we let $\LL_n$ be the set of all vertices on the $n$th level of $\T$.

\begin{figure}[h!]
\includegraphics[scale=0.8]{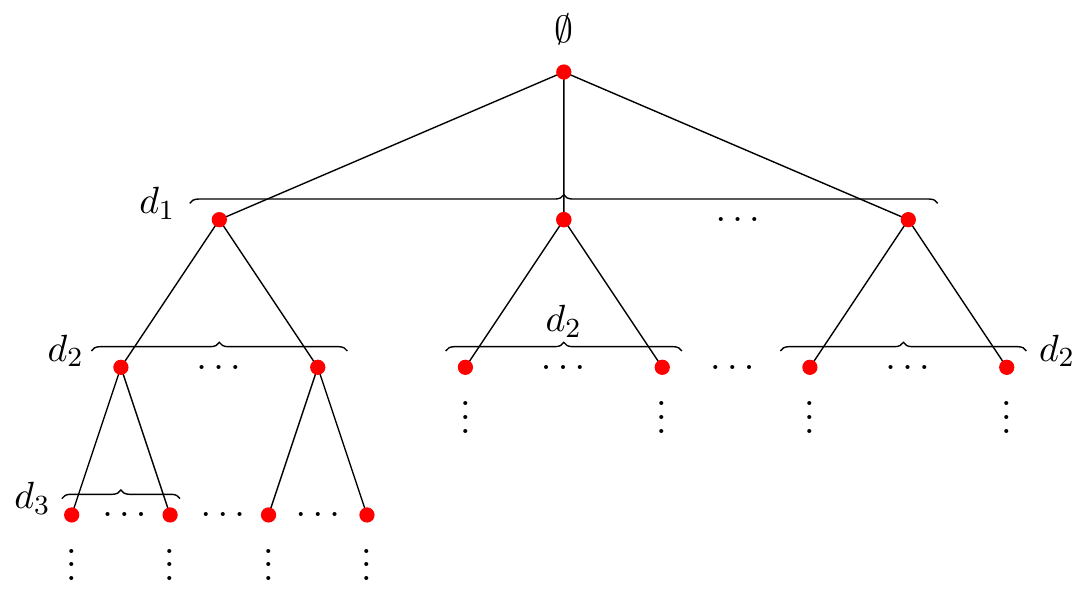}
\caption{A spherically homogeneous rooted tree}
\end{figure}

Let $\Aut\T$ be the group of automorphisms of $\T$ (i.e.\ bijective maps from $V(\T)$ to itself that preserve the root and incidence) under the operation of composition.
Every $f\in\Aut\T$ can be described by providing, at every vertex $v$ of the tree, the permutation
$f_{(v)}$ that indicates how $f$ sends the descendants of $v$ onto the descendants of $f(v)$.
This permutation is called the \emph{label} of $f$ at $v$, and if $v$ lies at level $n$ then
$f_{(v)}\in S_{d_{n+1}}$.
The collection of all labels of $f$ constitutes the \emph{portrait} of $f$, and there is a one-to-one correspondence between automorphisms of $\T$ and portraits.
An automorphism of $\T$ is called \emph{finitary} if it has finitely many non-trivial labels in its portrait.
Finitary automorphisms form a locally finite subgroup $\FF$ of $\Aut\T$.
If $\T$ is a $p$-adic tree for a prime $p$ and we fix a $p$-cycle $\sigma$ in $S_p$, the group $\FF_p$ of finitary automorphisms whose labels are all powers of $\sigma$ constitute a subgroup of $\FF$.
We call this the \emph{group of $p$-finitary automorphisms} of $\T$.
(We give no reference to $\sigma$, since different choices of the $p$-cycle give rise to isomorphic groups.)
Observe that $\FF_p$ is locally a finite $p$-group.

We write $\St(v)$ for the stabilizer of $v\in V(\T)$ in $\Aut\T$ and $\St(n)$ for the pointwise stabilizer of
$\LL_n$, i.e.\
\[
\St(n) = \cap_{v\in\LL_n} \, \St(v).
\]
The latter is a normal subgroup of finite index of $\Aut\T$.
The factor group $\Aut\T/\St(n)$ is naturally isomorphic to the automorphism group of the finite tree consisting of all levels of $\T$ up to (and including) the $n$th level.
Then $\Aut\T$ is isomorphic to the inverse limit of these finite groups, and is so a profinite group.
Also, we have
\[
\Aut\T
\cong
\cdots ( S_{d_n} \wr ( \cdots ( S_{d_3}\wr (S_{d_2}\wr S_{d_1})) \cdots )) \cdots,
\]
where the iterated wreath product is permutational at every step.
If we consider the $p$-adic tree $\T_p$, where $p$ is a prime, and we consider a fixed $p$-cycle
$\sigma\in S_p$, then the set $\Gamma_p$ of all automorphisms of $\T$ with labels in
$\langle \sigma \rangle$ is a Sylow pro-$p$ subgroup of $\Aut\T_p$.
We say that $\Gamma_p$ is a \emph{standard Sylow pro-$p$ subgroup} of $\Aut\T_p$.
Observe that
\[
\Gamma_p
\cong
\cdots ( C_p \wr ( \cdots ( C_p\wr (C_p\wr C_p)) \cdots )) \cdots.
\]

Let $\T_v$ be the subtree hanging from the vertex $v$ of the tree.
We have $\T_u \cong \T_v$ for any two vertices $u$, $v$ on the same level, and we denote by
$\T_{\langle n \rangle}$ any tree isomorphic to a subtree with root in $\LL_n$.
If $s$ denotes the shift operator that erases the first term of a sequence, then
$\T_{\langle n \rangle}$ is isomorphic to the spherically homogeneous tree defined by the sequence
$s^n(\,\ol{d}\,)$.

Every $f\in\Aut\T$ naturally induces a bijection between $\T_v$ and $\T_{f(v)}$ which, under the identification of these trees with $\T_{\langle n \rangle}$, defines an automorphism $f_v$ of
$\T_{\langle n \rangle}$.
This is called the \emph{section} of $f$ at $v$.
Sections satisfy the following rules, for all $f,g\in\Aut\T$ and $v\in V(\T)$:
\[
(fg)_v = f_v g_{f(v)}
\]
and
\[
(f^g)_{g(v)} = (g_v)^{-1} f_v \; g_{f(v)}.
\]
As a consequence, if $f\in\St(v)$, we get
\begin{equation}
\label{section of conjugate}
(f^g)_{g(v)} = (f_v)^{g_v}.
\end{equation}

If $f$ fixes the vertex $v$, then the section $f_v$ is nothing but the restriction of $f$ to $\T_v$.
The assignment $f\mapsto f_v$ induces a homomorphism
$\psi_v \colon \St(v) \rightarrow \Aut\T_{\langle n \rangle}$, and the map
\[
\begin{matrix}
\psi_n & \colon & \St(n) & \longrightarrow
& \Aut\T_{\langle n \rangle} \times \overset{d_1\ldots d_n}{\cdots} \times \Aut\T_{\langle n \rangle}
\\[5pt]
& & f & \longmapsto & \left( f_v \right)_{v\in \LL_n}.
\end{matrix}
\]
is an isomorphism.
If $n=1$ we simply write $\psi$ for $\psi_1$.
In the case of a $d$-adic tree, we get
\[
\St(n) \cong \Aut\T \times \overset{d^n}{\cdots} \times \Aut\T.
\]

Observe also that $\Aut\T$ splits over $\St(n)$ for every $n\ge 1$.
One can take as a complement the subgroup
\begin{align*}
H_n
&=
\{ f\in\Aut\T \mid f_v=1 \text{ for all $v\in\LL_n$} \}
\\
&=
\{ f\in\Aut\T \mid f_{(v)}=1 \text{ for all $v\in\cup_{i\ge n} \, \LL_i$} \}.
\end{align*}
The automorphisms in $H_1$ are called \emph{rooted automorphisms} of $\T$.
They act on $\T$ by permuting rigidly the subtrees hanging from the root according to some permutation of $S_d$.
Every automorphism $f\in\Aut\T$ can be uniquely written in the form $gh$, where $g\in\St(1)$ and $h$ is rooted.
If $\psi(g)=(g_1,\ldots,g_d)$ and $h$ corresponds to a permutation $\sigma\in S_d$, we use the following
shorthand notation to denote $f$:
\[
f = (g_1,\ldots,g_d) \sigma.
\]

Now let $G$ be a subgroup of $\Aut\T$.
We set $\St_G(n)=\St(n)\cap G$ for all $n\ge 1$.
If $v$ is a vertex of $\T$, the \emph{rigid stabilizer} of $v$ in $G$ is defined as follows:
\[
\Rist_G(v) = \{ g\in G \mid \text{$g(u)=u$ for all $u$ lying outside $\T_v$} \}.
\]
If $V$ is a set of vertices, all lying on the same level of $\T$, we set
\[
\Rist_G(V) = \langle \Rist_G(v) \mid v\in V \rangle,
\]
the rigid stabilizer of $V$ in $G$.
It turns out that
\[
\Rist_G(V) = \prod_{v\in V} \, \Rist_G(v).
\]
We write $\Rist_G(n)$ for the rigid stabilizer of $\LL_n$, and call it the \emph{$n$th rigid stabilizer}
of $G$.
It is the direct product of the rigid stabilizers of all vertices of $\LL_n$, and it is the largest ``geometrical" direct product inside $\St_G(n)$, in the sense that a subgroup $H$ of $\St_G(n)$ satisfies
\[
\psi_n(H) = \prod_{v\in \LL_n} \, H_v
\]
with $H_v\le \Aut\T_v$ if and only if $H\le\Rist_G(n)$.
Obviously, if $G$ is the whole of $\Aut\T$ then the $n$th rigid stabilizer coincides with the $n$th level stabilizer.
However, this is not usually the case for arbitrary subgroups of $\Aut\T$.

By \eqref{section of conjugate}, we have
\begin{equation}
\label{conjugate of rist}
\Rist_G(v)^g = \Rist_G(g(v))
\end{equation}
for every $v\in V(\T)$ and $g\in G$.
Thus if $G$ is \emph{spherically transitive}, i.e.\ if $G$ acts transitively on every $\LL_n$, then each level rigid stabilizer $\Rist_G(n)$ is a direct product of isomorphic subgroups for all $n\in\N$.
We are now ready to introduce the class of groups that are the object of our study.

\begin{Definition}
Let $G$ be a spherically transitive subgroup of $\Aut\T$.
Then:
\begin{enumerate}[(a)]
\item
If $|G:\Rist_G(n)|<\infty$ for all $n\in\N$, we say that $G$ is a \emph{branch group}.
\item
If $\Rist_G(n)\ne 1$ for all $n\in\N$, we say that $G$ is a \emph{weakly branch group}.
\end{enumerate}
\end{Definition}

Notice that all rigid level stabilizers in a weakly branch group are infinite.
Also, since spherically transitive groups are infinite, branch groups are obviously weakly branch.

After this quick introduction to groups of automorphisms of a spherically homogeneous rooted tree, we start developing the tools that we will use in the proof of Theorems A, B, and C.
A key ingredient in our approach to Engel problems in weakly branch groups is the reduction of the action of an automorphism $f$ from the whole tree to one or several ``reduced trees" determined by some special orbits of $f$ on $V(\T)$.
For this reason, we start by describing some properties of orbits of automorphisms of $\T$.

\begin{Definition}
If $f\in\Aut\T$ and $v\in V(\T)$, the \emph{$f$-orbit} of $v$ is the orbit of $v$ under the action of
$\langle f \rangle$ on $V(\T)$, i.e.\ the set $\{ f^i(v) \mid i\in\Z \}$.
The $f$-orbit is \emph{trivial} if it consists of only one vertex, that is, if $f(v)=v$.
\end{Definition}

In the statement of the following lemma, we consider the least common multiple of an unbounded family of positive integers to be infinity.

\begin{Lemma}
\label{lcm of orbit lengths}
Let $f\in\Aut\T$ and, for every vertex $v\in V(\T)$, let $\OO_v$ be the $f$-orbit of $v$.
Then the following hold:
\begin{enumerate}
\item
If $w$ is a descendant of $v$, then $|\OO_v|$ divides $|\OO_w|$.
\item
$|f|=\lcm( |\OO_v| \mid v\in V(\T) )$.
\item
If $|f|$ is finite then there exists a finite subset $V$ of $V(\T)$ satisfying that
$|f|=\lcm( |\OO_v| \mid v\in V )$ and that, whenever $w$ is a descendant of a vertex $v\in V$,
we have $|\OO_w|=|\OO_v|$.
Also if $f$ is non-trivial then all the orbits $\OO_v$ with $v\in V$ are non-trivial.
Furthermore, $V$ can be chosen to lie in $\LL_n$ for some $n$.
\end{enumerate}
\end{Lemma}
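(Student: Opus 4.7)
The plan is to prove the three parts in sequence, with parts (i) and (ii) being quick and part (iii) requiring a compactness/stabilisation argument based on them.

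For part (i), if $w$ is a descendant of $v$ and $k=|\OO_w|$, then $f^k(w)=w$; since $f^k$ is a tree automorphism, it sends the unique ancestor of $w$ at the level of $v$ to an ancestor of $w$ at that same level, which must be $v$ itself. Hence $f^k(v)=v$ and $|\OO_v|$ divides $k$. For part (ii), an automorphism $f^k$ is trivial iff it fixes every vertex of $\T$, i.e.\ iff $|\OO_v|$ divides $k$ for every $v\in V(\T)$; the smallest such $k$ is precisely $\lcm(|\OO_v|\mid v\in V(\T))$ (interpreted as $\infty$ when no finite bound exists), which must then equal $|f|$.

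For part (iii), set $m:=|f|<\infty$ and put $m_n:=\lcm(|\OO_v|\mid v\in\LL_n)$. By (i), $m_n\mid m_{n+1}$ and every $m_n$ divides $m$, while by (ii) the supremum of the $m_n$ equals $m$. Since the $m_n$ form a non-decreasing chain of divisors of the finite integer $m$, the chain stabilises: there is some $N$ with $m_n=m$ for all $n\ge N$. The next step is to push the set of vertices realising the lcm down to a single level at which orbit lengths no longer grow. For each $u\in\LL_N$ the orbit lengths of vertices in the subtree $\T_u$ are bounded by $m$, hence the maximum $d_u$ of $\{|\OO_w|\mid w\in V(\T_u)\}$ is attained at some vertex $w_u\in V(\T_u)$. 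By (i), every descendant $w'$ of $w_u$ satisfies both $|\OO_{w_u}|\mid |\OO_{w'}|$ and $|\OO_{w'}|\le d_u=|\OO_{w_u}|$, so $|\OO_{w'}|=|\OO_{w_u}|$.

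Now choose $n$ to exceed the levels of all $w_u$ for $u\in\LL_N$ (a finite collection), and for each $u$ let $w_u'\in\LL_n$ be any descendant of $w_u$. By the previous paragraph $|\OO_{w_u'}|=|\OO_{w_u}|=d_u$, every descendant of $w_u'$ has this same orbit length, and $\lcm(d_u\mid u\in\LL_N)$ both divides $m$ and is at least $m_N=m$, so equals $m$. Taking $V:=\{w_u'\mid u\in\LL_N\}\setminus\{v\mid |\OO_v|=1\}$ gives a finite subset of $\LL_n$ with all the required properties; when $f\ne 1$ we have $m>1$, and removing the trivial orbits leaves the lcm unchanged and ensures non-triviality of every $\OO_v$ for $v\in V$. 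The main conceptual step is the stabilisation of $m_n$ together with the realisation of the fibrewise maxima $d_u$, which together guarantee that the orbit lengths stop growing below $V$; the rest is bookkeeping to align everything on a common level.
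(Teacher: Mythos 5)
Your proofs of (i) and (ii) are correct and essentially the paper's arguments in a different guise (orbit--stabilizer for (i), and for (ii) the fact that an automorphism is determined by its action on vertices), but your proof of (iii) takes a genuinely different and equally valid route. The paper works inside the set $L$ of all orbit lengths: it fixes the minimal cardinality $k$ of a subset of $L$ with lcm equal to $|f|$, puts a partial order on the finite family of such $k$-element subsets via divisibility-respecting bijections, and extracts a maximal element; maximality forces orbit lengths to be constant along descendants, while minimality of $k$ is what makes the order antisymmetric and rules out trivial orbits. You instead observe that the level-wise lcms $m_n$ form a divisibility chain among the divisors of $|f|$ and hence stabilise at $|f|$ from some level $N$ on, and then in each subtree $\T_u$ with $u\in\LL_N$ pick a vertex of maximal orbit length $d_u$; maximality of $d_u$ together with (i) yields constancy on descendants, and $|\OO_u|\mid d_u\mid |f|$ combined with $m_N=|f|$ yields $\lcm(d_u\mid u\in\LL_N)=|f|$. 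Your argument is more elementary (no order relation, no antisymmetry verification) and delivers everything the later applications need; what it gives up is the minimal cardinality of the fundamental system, since your $V$ may contain up to $|\LL_N|$ vertices, but the paper never exploits that minimality. The only cosmetic point is the degenerate case $f=1$, where your final $V$ is empty; since the non-triviality clause is only asserted for $f\ne 1$ and $\lcm(\emptyset)=1=|f|$ there, nothing is lost.
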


\begin{proof}
(i)
This is obvious by the orbit-stabilizer theorem, since $\St(w)\subseteq \St(v)$.

(ii)
Set $H=\langle f \rangle$.
Then $|\OO_v|=|H/\St_H(v)|$ for all $v\in V(\T)$.
The natural map $\varphi$ from $H$ to the cartesian product of finite groups
$\prod_{v\in V(\T)} \, H/\St_H(v)$ is injective, since the intersection of all vertex stabilizers is trivial.
Consequently
\[
|f| = |\varphi(f)| = \lcm( |f\St_H(v)| \mid v\in V(T) ) = \lcm( |H/\St_H(v)| \mid v\in V(T) ),
\]
which proves the result.

(iii)
Let $L=\{|\OO_v| \mid v\in V(\T) \}$.
If $|f|$ is finite then, by (ii), it can be achieved as the least common multiple of a finite subset of $L$.
Let $k$ be the minimum cardinality of such a subset and let
\[
\mathcal{S} = \{ S\subseteq L \mid \text{$|S|=k$ and $\lcm(S)=|f|$} \}.
\]
Observe that $\mathcal{S}$ is a finite set.

We introduce a relation $\le_{\text{d}}$ in $\mathcal{S}$ by letting $S\le_{\text{d}} T$ if there exists a bijection $\alpha:S\rightarrow T$ such that $s\mid \alpha(s)$ for all $s\in S$.
By (i), this models the situation when we pass from the orbits of a set of vertices to the orbits of a set of descendants of those vertices.
We claim that $\le_\text{d}$ is an order relation in $\mathcal{S}$.
Obviously, only antisymmetry needs to be checked.
Assume that $\alpha:S\rightarrow T$ and $\beta:T\rightarrow S$ are such that $s\mid \alpha(s)$ and $t\mid \beta(t)$ for all $s\in S$ and $t\in T$.
Then $s$ divides $\beta(\alpha(s))$ and, if they are not equal, we get $\lcm(S\smallsetminus \{s\})=|f|$.
This is contrary to the minimality condition imposed on $k$.
Thus $\beta(\alpha(s))=s$ and, since $s\mid \alpha(s)$ and $\alpha(s)\mid \beta(\alpha(s))$, we obtain
that $\alpha(s)=s$ for all $s\in S$.
We conclude that $S=T$, which proves antisymmetry of $\le_{\text{d}}$.

Now choose $S$ in $\mathcal{S}$ that is maximal with respect to the order $\le_{\text{d}}$, and let
$V=\{v_1,\ldots,v_k\}\subseteq V(\T)$ be such that $S=\{|\OO_{v_1}|,\ldots,|\OO_{v_k}|\}$.
Consider an arbitrary set of vertices $W=\{w_1,\ldots,w_k\}$, where each $w_i$ is a descendant of $v_i$, and let $T=\{|\OO_{w_1}|,\ldots,|\OO_{w_k}|\}$.
Then $S\le_{\text{d}} T$ and, by the maximality of $S$, we have $S=T$.
This implies that $|\OO_{w_i}|=|\OO_{v_i}|$ for all $i=1,\ldots,k$.
Observe also that the minimality of $k$ implies that, if $f$ is non-trivial, no orbit $\OO_v$ with $v\in V$ is of length $1$.
Hence $V$ satisfies the properties stated in (iii).

Finally, observe that also the set $W$ satisfies the required properties.
Thus by considering, for a suitable $n$, a subset of $\LL_n$ consisting of one descendant of each vertex in $V$, we may assume that $V\subseteq\LL_n$.
\end{proof}

Vertices and orbits as in part (iii) of the previous lemma will play a fundamental role in the rest of the paper, and it is convenient to introduce some terminology.

\begin{Definition}
Let $f\in\Aut\T$ and let $\OO$ be an $f$-orbit.
We say that $\OO$ is \emph{totally splitting} if for every
descendant $w$ of a vertex $v\in\OO$, the length of the $f$-orbit of $w$ is equal to $|\OO|$.
\end{Definition}

Equivalently, an $f$-orbit $\OO$ is totally splitting when the set of descendants of the vertices in $\OO$ at every level of the tree splits into the maximum possible number of $f$-orbits.

\begin{Definition}
Let $f\in\Aut\T$ be an automorphism of finite order.
If $V$ is a finite set of vertices satisfying the conditions in (iii) of \cref{lcm of orbit lengths}, all of them lying on the same level of $\T$, we say that $V$ is a \emph{fundamental system of vertices} for $f$.
\end{Definition}

Next we give a sufficient condition for two automorphisms of $\T$ to generate a wreath product.

\begin{Lemma}
\label{wreath}
Let $f\in\Aut\T$ be an automorphism of finite order $m$, and assume that the $f$-orbit of a vertex
$v\in V(\T)$ has length $m$.
Then for every $g\in\Rist(v)$, the subgroup $\langle g,f \rangle$ of $\Aut\T$ is isomorphic to the regular wreath product $\langle g \rangle \wr \langle f \rangle$.
\end{Lemma}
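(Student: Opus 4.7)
The plan is to set $g_i := g^{f^i}$ for $i = 0, 1, \ldots, m-1$ and show that $N := \langle g_0, \ldots, g_{m-1} \rangle$ is an internal direct product $\prod_{i=0}^{m-1} \langle g_i \rangle$ on which $\langle f \rangle$ acts by cyclically permuting the $m$ factors; once this is established, the statement follows by identifying $\langle g, f \rangle = N \rtimes \langle f \rangle$ with the regular wreath product $\langle g \rangle \wr \langle f \rangle$.

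The first step is to apply \eqref{conjugate of rist} with the automorphism $f^i$, giving $g_i \in \Rist(f^i(v))$. The hypothesis that the $f$-orbit of $v$ has length $m = |f|$ forces the vertices $v, f(v), \ldots, f^{m-1}(v)$ to be pairwise distinct and to lie on the same level, so the subtrees $\T_{f^i(v)}$ are pairwise disjoint. Since two automorphisms of $\T$ supported on disjoint subtrees commute, I obtain $[g_i, g_j] = 1$ whenever $i \ne j$, and the natural map $\langle g \rangle^m \to N$ sending the $i$-th factor onto $\langle g_i \rangle$ is a surjective homomorphism. Injectivity — and hence $N \cong \langle g \rangle^m$ as an internal direct product — follows because any relation $g_0^{a_0} \cdots g_{m-1}^{a_{m-1}} = 1$ restricts on each $\T_{f^j(v)}$ to $g_j^{a_j} = 1$, the other factors acting trivially on that subtree, so $g_j^{a_j}$ is already trivial outside $\T_{f^j(v)}$ and is therefore the identity.

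It is then immediate that $f$ normalises $N$ via $g_i^f = g_{i+1 \bmod m}$, and this conjugation action is precisely the regular action of $\langle f \rangle$ on the $m$ factors of $N$. Thus $\langle g, f \rangle = N \langle f \rangle$ is a homomorphic image of the regular wreath product $\langle g \rangle \wr \langle f \rangle$, and the proof reduces to verifying $N \cap \langle f \rangle = 1$.

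This last step is where I see the main (modest) obstacle, and I would resolve it through the simple observation that every element of $N$ fixes the vertex $v$: each $g_i$ lies in $\Rist(f^i(v))$ and therefore fixes $v$, either because $v \notin \T_{f^i(v)}$ when $i \ne 0$ (as $v$ and $f^i(v)$ are distinct vertices on the same level) or because $v$ is the root of $\T_v$ when $i = 0$. On the other hand, for $1 \le k < m$ the element $f^k$ sends $v$ to $f^k(v) \ne v$ by the orbit-length hypothesis, so $f^k \notin N$. Consequently $N \cap \langle f \rangle = 1$, yielding $\langle g, f \rangle = N \rtimes \langle f \rangle \cong \langle g \rangle \wr \langle f \rangle$ as required.
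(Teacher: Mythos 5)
Your proof is correct and follows essentially the same route as the paper's: conjugating $g$ by powers of $f$ to land in the rigid stabilizers of the distinct vertices $f^i(v)$ via \eqref{conjugate of rist}, deducing the internal direct product structure of the base group from disjointness of the subtrees, and using the orbit-length hypothesis to get $\langle f\rangle \cap N = 1$. The only cosmetic difference is that the paper phrases the last step as $\langle f\rangle \cap \St(n) = 1$ for the level $n$ containing $v$, whereas you argue directly with the stabilizer of $v$ itself; these are interchangeable.
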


\begin{proof}
Let $\OO$ be the $f$-orbit of $v$.
Since $|\OO|=|f|$, we have $\langle f \rangle \cap \St(v)=1$.
As a consequence, if $v$ lies at level $n$ of the tree, also $\langle f \rangle \cap \St(n)=1$ and
\begin{equation}
\label{<g,f> semidirect}
\langle g,f \rangle = \langle f \rangle \, \langle g,g^f,\ldots,g^{f^{m-1}} \rangle
= \langle f \rangle \ltimes \langle g,g^f,\ldots,g^{f^{m-1}} \rangle,
\end{equation}
since $g\in\Rist(v)$ implies that $\langle g,g^f,\ldots,g^{f^{m-1}} \rangle \subseteq \St(n)$.

Now set $v_i=f^i(v)$ for all $i\in\Z$, so that $\OO=\{v_0,v_1,\ldots,v_{m-1}\}$.
Since $g\in\Rist_G(v)$, from \eqref{conjugate of rist} we get $g^{f^i}\in\Rist_G(v_i)$ for all $i=0,\ldots,m-1$, and then
\[
\langle g^{f^i} \rangle \cap \langle g,g^f,\ldots,g^{f^{i-1}} \rangle
\subseteq \Rist_G(v_i)\cap \Rist_G(\{v_1,\ldots,v_{i-1}\}) = 1.
\]
Also $[g^{f^i},g^{f^j}]=1$ for every $i,j\in\{0,\ldots,m-1\}$.
It follows that
\[
\langle g,g^f,\ldots,g^{f^{m-1}} \rangle
=
\langle g \rangle \times \langle g^f \rangle \times \cdots \times \langle g^{f^{m-1}} \rangle,
\]
and since $g^{f^m}=g$, we conclude from \eqref{<g,f> semidirect} that
$\langle g,f \rangle \cong \langle g \rangle \wr \langle f \rangle$.
\end{proof}

The result in \cref{wreath} raises the question of whether an automorphism $f\in\Aut\T$ of finite order $m$ must have a regular orbit on $V(\T)$, i.e.\ an orbit of length $m$.
This is clearly the case if $m$ is a prime power, by (ii) of \cref{lcm of orbit lengths}, but it usually fails otherwise.
Indeed, one can consider for example a rooted automorphism corresponding to a permutation whose order is strictly bigger than the lengths of its disjoint cycles.
However, as we see in \cref{f->fi} below, it is always possible to derive a collection of automorphisms $f_i$ from $f$, acting not on $\T$ but on some other rooted trees $\RR_i$ obtained from $\T$, and having the property that every $f_i$ has a regular orbit on $V(\RR_i)$.
These automorphisms $f_i$ will allow us to study Engel conditions regarding $f$ by using \cref{wreath}.

As we will see, \cref{f->fi} is essentially a reformulation of (iii) of \cref{lcm of orbit lengths}.
Before proceeding we need to introduce the concept of reduced tree.
Note that reduced trees are somehow related to the trees obtained by deletion of layers
defined by Grigorchuk and Wilson in \cite{uniqueness}.

\begin{Definition}
Let $V$ be a subset of vertices of $\T$, all lying on the same level $n$.
We define the \emph{reduced tree} of $\T$ at $V$, denoted by $\RR(V)$, as the rooted tree consisting of the subtrees $\T_v$ for $v\in V$, all connected to a common root.
In other words, the set of vertices of $\RR(V)$ is
\[
\{ \emptyset \} \cup \{ vw \mid v\in V,\ w\in \T_{s^{n+1}(\, \overline d \,)} \},
\]
where as before $s$ denotes the shift operator on sequences.
\end{Definition}

For example, in the following figure, we consider the rooted automorphism $f$ of the ternary tree $\T_3$ corresponding to the permutation $(1\ 2\ 3)$ and we show in red the reduced tree at the orbit of the vertex $13$:

\begin{figure}[h!]
\includegraphics[scale=0.6]{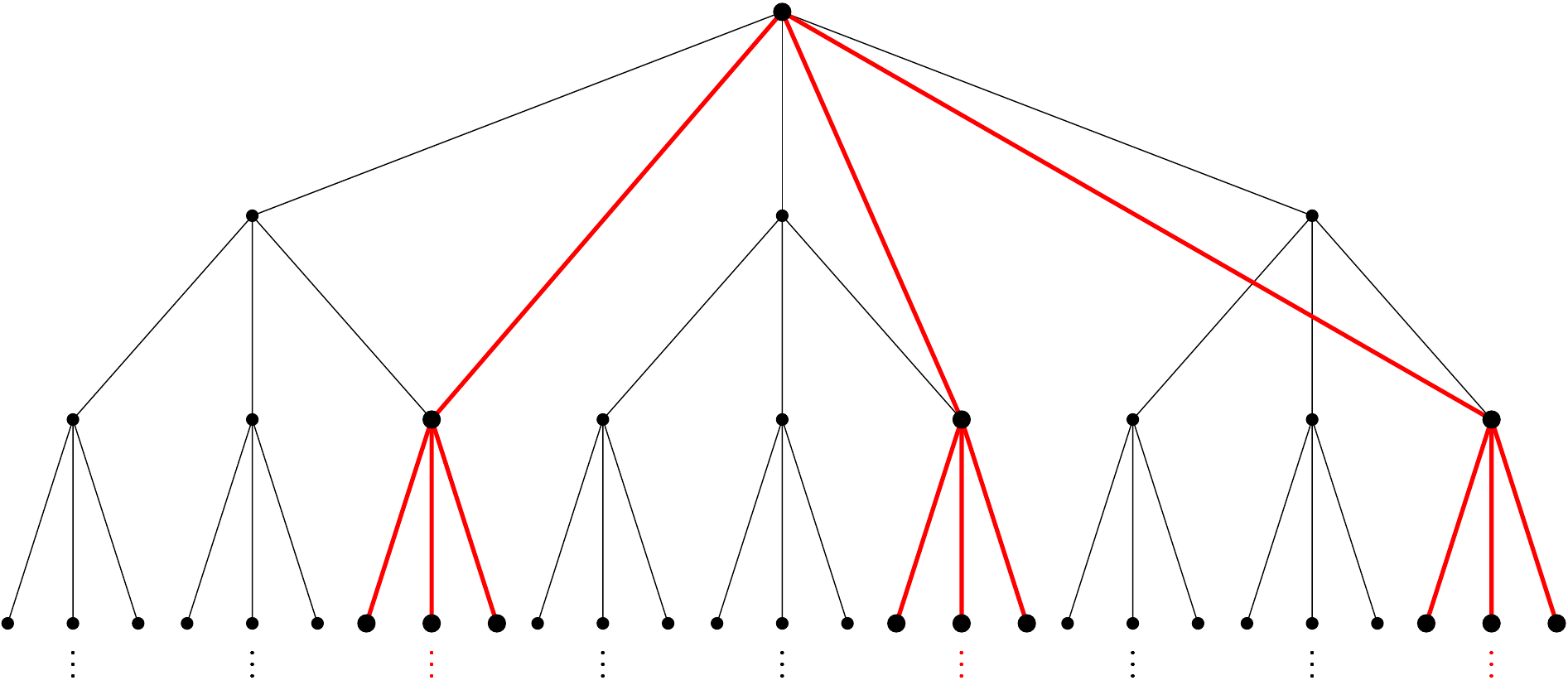}
\caption{An $f$-orbit and its corresponding reduced tree}
\end{figure}

Every $f\in\Aut\T$ such that $f(V)=V$ induces by restriction an automorphism $f_V\in \Aut\RR(V)$.
Clearly, the map $\Phi_V:f\longmapsto f_V$ is a homomorphism of groups.
The effect of $\Phi_V$ is to focus on the action of $f$ only on the subtrees $\T_v$ with $v\in V$, so to speak.
We will use reduced trees mainly in the case where $V$ is an orbit of $f$.

\begin{Remark}
\label{induced regular orbit}
If $v$ is a vertex of the reduced tree $\RR(V)$ and $f\in\Aut\T$ is such that $f(V)=V$, then the
$f_V$-orbit of $v$ coincides with the $f$-orbit of $v$ as a vertex in $V(\T)$.
In particular, if $\OO$ is a totally splitting $f$-orbit and we consider the induced automorphism $x=\Phi_{\OO}(f)$ of $\RR(\OO)$, then (ii) of \cref{lcm of orbit lengths} implies that 
$|x|=|\OO|$.
In other words, $\OO$ is a regular orbit of $x$ in $\RR(\OO)$.
\end{Remark}

Given a subgroup $G$ of $\Aut\T$, we write $G_V$ for the image of the setwise stabilizer of
$V$ in $G$ under the homomorphism $\Phi_V$.
In other words,
\[
G_V = \{ f_V \mid \text{$f\in G$ and $f(V)=V$} \}.
\]
Then $G_V$ is a subgroup of $\Aut \RR(V)$, and for every vertex $v\in V$ we have
$\Phi_V(\Rist_G(v))\subseteq \Rist_{G_V}(v)$ (the inclusion can be proper, since there can be automorphisms in $G$ whose action is trivial on $\T_w$ for every $w\ne v$ with $w\in V$, but non-trivial for some $w\not\in V$).

On the other hand, if $f\in G$ stabilizes the set $V$ and $x=\Phi_V(f)$ is the induced automorphism of
$\RR(V)$, then $f\in \Lc(G)$ or $f\in\ol{\Lc}(G)$ imply that $x\in\Lc(H)$ or $x\in\ol{\Lc}(H)$, respectively.
In particular, by choosing $V$ to be an $f$-orbit, this will allow us to transfer the analysis of a given Engel element in a subgroup of $\Aut\T$ to a more restricted situation where, for example, the Engel element acts transitively on the first level of the tree.

Actually the most convenient strategy is to reduce the tree to non-trivial totally splitting $f$-orbits, since the induced automorphisms will then have regular orbits.
More precisely, we will rely on the following lemma, which is basically a rephrasing of part of
\cref{lcm of orbit lengths} in the language of reduced trees.

\begin{Lemma}
\label{f->fi}
Let $f\in\Aut\T$ be an automorphism of finite order $m>1$ and let $\{v_1,\ldots,v_k\}$ be a fundamental system of vertices for $f$.
For every $i=1,\ldots,k$, let $\OO_i$ be the $f$-orbit of $v_i$, set $\RR_i=\RR(\OO_i)$, and let $f_i$ be the automorphism of $\RR_i$ induced by $f$.
Then the following hold:
\begin{enumerate}
\item
$\lcm(|\OO_1|,\ldots,|\OO_k|)=m$. 
\item
$\OO_i$ is a non-trivial totally splitting $f$-orbit for every $i=1,\ldots,k$.
\item
$|f_i|=|\OO_i|$ for every $i=1,\ldots,k$.
\end{enumerate} 
\end{Lemma}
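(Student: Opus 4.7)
The plan is to extract each statement directly from the defining property of a fundamental system (\cref{lcm of orbit lengths}(iii)), together with the fact that $f$-orbits are translates of one another along the action of $\langle f\rangle$.

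Part (i) is immediate: by the very definition of fundamental system, $\{v_1,\ldots,v_k\}$ is a subset of $V(\T)$ realising the least common multiple $m$ of the $f$-orbit lengths. Likewise, non-triviality of each $\OO_i$ in part (ii) is part of the definition (since $f$ is non-trivial because $m>1$).

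The one genuinely non-trivial point is that each $\OO_i$ is totally splitting, since the fundamental-system condition only controls descendants of $v_i$ itself, not descendants of the other vertices in $\OO_i$. To handle this I would pick an arbitrary descendant $w$ of some $v\in\OO_i$, write $v=f^j(v_i)$, and set $w'=f^{-j}(w)$. Because $f$ is a tree automorphism, it carries descendants of $v_i$ onto descendants of $f^j(v_i)=v$, so $w'$ is a descendant of $v_i$. The defining property of a fundamental system then gives $|\OO_{w'}|=|\OO_{v_i}|=|\OO_i|$, and since $w$ and $w'$ lie in the same $f$-orbit we conclude $|\OO_w|=|\OO_i|$, as required.

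For part (iii) I would apply \cref{lcm of orbit lengths}(ii) to the automorphism $f_i\in\Aut\RR_i$, obtaining
\[
|f_i|=\lcm\bigl(|\OO^{f_i}_u|\mid u\in V(\RR_i)\bigr),
\]
where $\OO^{f_i}_u$ denotes the $f_i$-orbit of $u$. By \cref{induced regular orbit}, each such orbit coincides with the corresponding $f$-orbit in $V(\T)$. Every non-root vertex of $\RR_i$ is either a vertex of $\OO_i$ itself (with $f$-orbit of length $|\OO_i|$) or a descendant of one such vertex (with $f$-orbit of length $|\OO_i|$ by part (ii)). Hence every non-trivial orbit appearing in the least common multiple has length exactly $|\OO_i|$, so $|f_i|=|\OO_i|$. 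The main conceptual step, and the only one requiring any work, is the orbit-translation argument used in part (ii); once that is in place, parts (i) and (iii) are essentially bookkeeping.
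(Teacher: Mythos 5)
Your proof is correct and follows essentially the same route as the paper, whose entire proof is one sentence citing \cref{lcm of orbit lengths}(iii) for parts (i)--(ii) and \cref{induced regular orbit} for part (iii). The only thing you add is the explicit orbit-translation argument ($w'=f^{-j}(w)$ is a descendant of $v_i$, and $|\OO_w|=|\OO_{w'}|$) showing that the descendant condition at $v_i$ propagates to every vertex of $\OO_i$ — a detail the paper leaves implicit in passing from the fundamental-system condition to the totally splitting property — and that argument is exactly right.
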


\begin{proof}
The first two items follow from (iii) of \cref{lcm of orbit lengths}, and (iii) from \cref{induced regular orbit}.
\end{proof}

\section{Some properties of Engel elements in wreath products}
\label{sec:engel in wreath}

In this section we prove several results regarding Engel elements in wreath products.
These will provide the basis for the proof of the main theorems in this paper, which will be addressed in Sections \ref{sec:left engel} and \ref{sec:right engel}.

We start by studying left Engel elements lying outside the base group of a regular wreath product of two cyclic groups.
To this purpose, we rely on the paper \cite{Liebeck} by Liebeck.

\begin{Lemma}
\label{Lieb}
Let $X=\langle x\rangle$ and $Y=\langle y\rangle$ be two non-trivial cyclic groups, where $X$ is finite, and let $W=Y\wr X$ be the corresponding regular wreath product.
If $x\in \Lc(W)$ then $X$ and $Y$ are finite $p$-groups for some prime $p$.
Furthermore, the Engel degree of $x$ on $g=(y,1,\ldots,1)$ is equal to
\[
|x|+\frac{1}{p}(\log_p |y|-1)(p-1)|x|.
\]
\end{Lemma}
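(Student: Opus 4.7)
The plan is to recast the left Engel condition as a nilpotency question in a commutative ring. Let $B$ denote the base group of $W = Y \wr X$, so that $B = \prod_{i=0}^{m-1} Y$ with $m = |x|$, and conjugation by $x$ induces the cyclic shift on $B$, which we denote $T$. Since $Y$ is abelian, we write $B$ additively; a straightforward induction on $n$ then gives
\[
[g,_n x] = (T-1)^n g \quad \text{for every } g \in B \text{ and every } n \geq 0.
\]
Writing $s = |y| \in \mathbb{N} \cup \{\infty\}$ (with the convention $\mathbb{Z}/\infty\mathbb{Z} := \mathbb{Z}$), the abelian group $B$ is a free module of rank one over the commutative ring
\[
R := (\mathbb{Z}/s\mathbb{Z})[T]/(T^m - 1),
\]
generated by $g = (y, 1, \ldots, 1)$. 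Hence $x$ is left Engel on $g$ if and only if $T-1$ is nilpotent in $R$, and the Engel degree of $x$ on $g$ equals the nilpotency index of $T-1$ in $R$.

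The first reduction is to show that $Y$ must be finite. If $s = \infty$, the Chinese Remainder Theorem applied to the cyclotomic factorisation $T^m - 1 = \prod_{d \mid m} \Phi_d(T)$ embeds $R = \mathbb{Z}[T]/(T^m - 1)$ into the reduced ring $\prod_{d \mid m} \mathbb{Z}[\zeta_d]$, in which $T-1$ cannot be nilpotent. Next, factorising $s = \prod_i q_i^{a_i}$ into prime powers, CRT gives $R \cong \prod_i (\mathbb{Z}/q_i^{a_i}\mathbb{Z})[T]/(T^m - 1)$, and $T-1$ is nilpotent in $R$ if and only if it is nilpotent in each factor. Reducing $(\mathbb{Z}/q^{a}\mathbb{Z})[T]/(T^m - 1)$ modulo $q$ yields $\mathbb{F}_q[T]/(T^m - 1)$, in which $T-1$ is nilpotent if and only if $T^m - 1$ is a power of $T-1$, that is, if and only if $m$ is a power of $q$. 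Since $m \geq 2$, at most one prime $p$ can divide $s$; this forces $s = p^a$ and $m = p^b$ for some prime $p$ and positive integers $a, b$, which proves that $X$ and $Y$ are finite $p$-groups.

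It remains to compute the nilpotency index of $u := T - 1$ in $R = (\mathbb{Z}/p^a\mathbb{Z})[T]/(T^{p^b} - 1)$. Expanding $(1+u)^{p^b} = 1$ yields the defining relation
\[
\sum_{k=1}^{p^b} \binom{p^b}{k} u^k = 0,
\]
and controlling the $p$-adic valuations of these coefficients via Kummer's theorem allows one to show, by induction on $a$, that the nilpotency index of $u$ equals
\[
N(p,a,b) = p^b + \tfrac{1}{p}(a-1)(p-1)p^b,
\]
matching the formula in the statement. The base case $a=1$ is immediate since $(1+u)^{p^b} \equiv 1+u^{p^b} \pmod{p}$, which forces $u^{p^b}=0$ and $u^{p^b-1}\neq 0$ in $\mathbb{F}_p[u]$. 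For the inductive step one uses that $u^{N(p,a-1,b)}$ is divisible by $p^{a-1}$ in $R$, so that multiplying by a further $(p-1)p^{b-1}$ powers of $u$ produces a divisibility by $p^a$, while a matching lower bound shows that no earlier power suffices. This calculation is essentially the content of \cite{Liebeck}, whose results I would cite directly for both the upper and lower bounds.

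The main obstacle is this final nilpotency-index computation: the reductions in the first two steps are essentially formal, but pinning down the exact value of $N(p,a,b)$ demands delicate inductive control of binomial $p$-adic valuations, and it is precisely here that the appeal to \cite{Liebeck} becomes indispensable.
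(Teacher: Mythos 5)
Your proposal is correct, but it reaches the structural conclusion (that $X$ and $Y$ are finite $p$-groups) by a genuinely different route. The paper first assumes $Y$ finite, applies Baer's theorem to place $x$ in the Fitting subgroup of the finite group $W$, and gets a contradiction from a normal $q$-subgroup $Z^X$ of the base group that would have to commute with the $p$-part of $x$; the infinite case is then reduced to this one via the quotients $(Y/Y^q)\wr X$. You instead linearise: viewing the base group additively as the free rank-one module over $R=(\Z/s\Z)[T]/(T^m-1)$ generated by $g$, the identity $[g,_n x]=(T-1)^n g$ turns the Engel condition on base-group elements into nilpotency of $T-1$ in $R$, after which everything follows from the factorisation of $T^m-1$: reducedness of $\prod_d \Z[\zeta_d]$ excludes $s=\infty$, and over $\F_q$ nilpotency of $T-1$ forces $m$ to be a $q$-power, so a single prime survives. (A minor point: the cyclotomic ideals $(\Phi_d)$ are not pairwise comaximal in $\Z[T]$, so this is an embedding rather than a CRT isomorphism, as you correctly say; one could equally just observe that $\Z[T]/(T^m-1)$ is reduced.) Your approach is more self-contained --- it avoids Baer's theorem, handles finite and infinite $Y$ uniformly, and makes explicit that only the Engel condition on base-group elements is used, which the hypothesis $x\in\Lc(W)$ certainly supplies --- while the paper's argument is shorter given that Baer's theorem is already quoted in the introduction. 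For the exact Engel degree both proofs defer to Liebeck's Theorem 5.1 at the same point, so your sketched induction on the $p$-adic valuations of the binomial coefficients is supplementary rather than load-bearing; your value $N(p,a,b)=p^b+\tfrac{1}{p}(a-1)(p-1)p^b$ agrees with the stated formula (e.g.\ for $C_4\wr C_2$ the nilpotency index of $T-1$ in $(\Z/4\Z)[T]/(T^2-1)$ is indeed $3$).
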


\begin{proof}
Let $m$ be the order of $x$, and let $p$ be an arbitrary prime divisor of $m$.
Also, write $d$ for the Engel degree of $x$ on $g=(y,1,\ldots,1)$.

First of all, suppose that $Y$ is finite.
Then $W$ is finite and, by Baer's theorem mentioned in the introduction, $x$ lies in the Fitting subgroup $F(W)$.
We claim that $Y$ is then a $p$-group.
To this purpose, assume that $|Y|$ is divisible by a prime $q\ne p$, and let
$Z=\langle z \rangle\ne 1$ be the subgroup of $Y$ of order $q$.
Consider the direct product $Z^X$ inside the base group of $W$.
Since $Z^X$ is abelian and normal in $W$, it lies in $F(W)$.
Now $Z^X$ is a $q$-group and $x_p=x^{m/p}$ is a $p$-element, and both lie in the nilpotent group
$F(W)$.
It follows that $x_p$ centralizes $Z^X$, which is clearly a contradiction, since $x_p$ does not commute
with $(z,1,\ldots,1)$.
This proves the claim, and since this property holds for every prime divisor of $m$, it also follows that $X$ is a $p$-group.
Observe that, since both $X$ and $Y$ are finite $p$-groups, the proof of Theorem 5.1 of \cite{Liebeck} yields that
\begin{equation}
\label{bound engel degree}
d = m+\frac{1}{p}(\log_p |y|-1)(p-1)m
\end{equation}
in this case.

Now it is easy to see that $Y$ cannot be infinite.
For a contradiction, suppose that $Y$ is infinite and consider a prime $q$ different from $p$.
Then the wreath product $W_q=(Y/Y^q)\wr X$ can be seen as a factor group of $W$, and
so $x$ is a left Engel element in $W_q$.
Since $|Y/Y^q|=q$ and $p$ divides $|X|$, we get a contradiction with the previous paragraph.
\end{proof}

Now we digress from Engel elements for a moment, but still working with wreath products of cyclic groups, in order to prove that rigid stabilizers of weakly branch groups are not only infinite, but have infinite exponent (\cref{rist infinite exponent} below).

\begin{Lemma}
\label{order in wreath}
Let $X=\langle x\rangle$ and $Y=\langle y\rangle$ be two finite cyclic groups, where $Y$ is non-trivial, and let $W=Y\wr X$ be the corresponding regular wreath product.
If $g=(y,1,\ldots,1)$ then $|xg|>|x|$.
\end{Lemma}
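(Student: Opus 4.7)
The plan is to show directly that $(xg)^m \neq 1$, where $m = |x|$, and then to exploit the fact that $xg$ lies outside the base group to deduce that $|xg|$ is strictly larger than $m$. More precisely, let $B$ denote the base group of $W = Y \wr X$, so that $W = B \rtimes X$ and $W/B \cong X$. Since $g \in B$, the image of $xg$ modulo $B$ generates $X$, and hence has order $m$. Consequently $m$ divides $|xg|$, and it will be enough to rule out the possibility $|xg| = m$.

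To this end, I would expand the power $(xg)^m$ using the standard identity
\[
(xg)^m = x^m \cdot g^{x^{m-1}} g^{x^{m-2}} \cdots g^{x} \cdot g
\]
valid in any semidirect product. Since $x^m = 1$, this simplifies to $(xg)^m = \prod_{i=0}^{m-1} g^{x^i}$, an element of $B$. Now $B = Y^m$, indexed by the $m$ elements of $X$, and $x$ acts on these coordinates by regular translation. Because $g = (y,1,\ldots,1)$ has a single non-trivial component equal to $y$, the conjugates $g, g^x, g^{x^2}, \ldots, g^{x^{m-1}}$ place $y$ in $m$ distinct coordinates, covering all of them.

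Since $Y$ is abelian, so is $B$, and the product above therefore equals the tuple $(y, y, \ldots, y)$. As $y \neq 1$, this product is non-trivial, so $(xg)^m \neq 1$, which forces $|xg| \neq m$. Combined with $m \mid |xg|$, we obtain $|xg| \geq 2m > |x|$, as desired.

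There is essentially no obstacle in this argument beyond some careful bookkeeping with the conjugation formula for $(xg)^m$ in a semidirect product and the verification that the $f$-orbit of the support of $g$ covers all coordinates of $B$; once these are in place, the conclusion is immediate. The proof does not use finiteness of $Y$ in any essential way beyond ensuring that $g$ has finite order, which is only needed implicitly through the statement.
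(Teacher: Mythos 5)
Your proof is correct, and it follows a slightly different (and in fact cleaner) route than the paper's. The paper proves the stronger-looking statement that $(xg)^n\ne 1$ for \emph{every} $n\in\{1,\ldots,m\}$, by expanding $(xg)^n = x^n g_1^n g_2^{\binom{n}{2}}\cdots g_n$ in terms of the iterated commutators $g_i=[g,x,\overset{i-1}{\ldots},x]$ and tracking the position of the last non-trivial coordinate, which turns out to carry the entry $y$ in position $n$. You instead observe that $m=|x|$ divides $|xg|$ because $xg$ maps onto a generator of $X\cong W/B$, which reduces the whole problem to the single verification that $(xg)^m\ne 1$; and for that exponent the semidirect-product identity collapses to $(xg)^m=\prod_{i=0}^{m-1}g^{x^i}=(y,y,\ldots,y)$, visibly non-trivial. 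Both arguments are sound computations in the abelian base group; yours avoids the binomial-coefficient bookkeeping entirely and, as a bonus, actually identifies the exact order $|xg|=m\,|y|$ (since $(y,\ldots,y)$ is fixed by $x$), which is more than the lemma asks for. Your closing remark that finiteness of $Y$ is not really needed is also accurate for both proofs.
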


\begin{proof}
Set $m=|x|$ and let $n\in\{1,\ldots,m\}$ be arbitrary.
Then
\[
\textstyle
{(xg)^n = x^n g_1^n g_2^{\binom{n}{2}}\ldots g_{n-1}^{\binom{n}{n-1}} g_n},
\]
where $g_1=g$ and $g_i=[g,x,\overset{i-1}{\dots},x]$ for every $i=2,\ldots,n$.
Now observe that each $g_i$ is of the form
\[
g_i=(\ast,\ldots,\ast,y,1,\ldots,1),
\]
where we use $\ast$ to denote unspecified powers of $y$, and $y$ occupies the $i$th position.
It follows that
\[
(xg)^n = x^n (\ast,\ldots,\ast,y,1,\ldots,1),
\]
where $y$ appears at the $n$th position.
In particular, $(xg)^n\ne 1$ for $1\le n\le m$, and consequently $|xg|>m$, as desired.
\end{proof}

\begin{Proposition}
\label{rist infinite exponent}
Let $G$ be a weakly branch group.
Then the exponent of $\Rist_G(n)$ is infinite for every $n\in\N$.
\end{Proposition}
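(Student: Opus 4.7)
The plan is to proceed by contradiction: assume that $\Rist_G(n)$ has finite exponent $E$, and let $\pi$ be the set of primes dividing $E$. For each $p\in\pi$, the orders of $p$-elements in $\Rist_G(n)$ form a non-empty bounded subset of powers of $p$; call its maximum $E_p=p^{k_p}$, where $k_p\geq 1$, and fix a $p$-element $x_p\in\Rist_G(n)$ of order $E_p$. Since $|x_p|$ is a prime power, part~(ii) of \cref{lcm of orbit lengths} forces some $x_p$-orbit to have length exactly $p^{k_p}$, at a vertex $v_p$; because $x_p$ fixes every vertex up to level $n$, this $v_p$ necessarily lies at some level $m_p>n$.

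The core step is to show that $\Rist_G(v_p)$ is a $p'$-group. Suppose instead it contained some element of order divisible by $p$; taking a suitable power would yield a non-trivial $p$-element $h\in\Rist_G(v_p)$. Then \cref{wreath} would give $\langle x_p,h\rangle\cong\langle h\rangle\wr\langle x_p\rangle$, which is a finite $p$-group because both cyclic factors are $p$-groups, and \cref{order in wreath} would provide $|x_p h|>|x_p|=E_p$. But $x_p h$ lies in $\Rist_G(n)$ and is itself a $p$-element, contradicting the maximality of $E_p$.

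To conclude, I would propagate this $p'$-property to a common deep level. By~\eqref{conjugate of rist} combined with spherical transitivity, every $\Rist_G(v')$ with $v'\in\LL_{m_p}$ is a conjugate of $\Rist_G(v_p)$, hence a $p'$-group; and for any $m\geq m_p$ and $v\in\LL_m$, $\Rist_G(v)$ is contained in the rigid stabilizer of its ancestor at level $m_p$, so is also a $p'$-group. Setting $M=\max_{p\in\pi}m_p$, every $\Rist_G(v)$ with $v\in\LL_M$ is simultaneously a $p'$-group for all $p\in\pi$. On the other hand, $\Rist_G(v)\leq\Rist_G(n)$ has exponent dividing $E$, so every one of its elements has order a $\pi$-number. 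Together these force $\Rist_G(v)=1$, contradicting the assumption that $G$ is weakly branch.

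I expect the main obstacle to be organising the argument so as to avoid a fragile iteration: a natural first approach is to keep applying \cref{wreath} and \cref{order in wreath} to produce ever-larger elements of $\Rist_G(n)$, but the resulting element $xg$ need not have prime-power order, which obstructs the reapplication of part~(ii) of \cref{lcm of orbit lengths} to locate another regular orbit. Starting instead from an element of \emph{maximal} prime-power order sidesteps this difficulty: the maximality is used exactly once, to derive the rigidity statement that $\Rist_G(v_p)$ is $p'$, and that statement is then amplified across primes by spherical transitivity until it contradicts the weakly branch hypothesis.
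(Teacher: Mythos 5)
Your proof is correct and rests on exactly the same engine as the paper's: take a $p$-element of maximal order in $\Rist_G(n)$, use part (ii) of \cref{lcm of orbit lengths} to find a regular orbit at a vertex $v$ below level $n$, and combine \cref{wreath} with \cref{order in wreath} to manufacture a $p$-element of strictly larger order in $\Rist_G(n)$, contradicting maximality. The only difference is organizational: the paper first fixes a single prime $p$ lying in the non-empty intersection of the (decreasing, finite) sets of primes occurring in the deeper rigid level stabilizers and then derives the contradiction in one stroke, whereas you run the maximality argument once per prime in $\pi$ to conclude that sufficiently deep rigid vertex stabilizers are $p'$-groups for every $p\in\pi$, hence trivial, contradicting weak branchness.
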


\begin{proof}
By way of contradiction, assume that $\Rist_G(n)$ has finite exponent.
Thus $\Rist_G(n)$ is periodic and there is a bound for the orders of its elements.
For every $k\ge n$, let $\pi_k$ be the (finite) set of prime divisors of the orders of the elements of
$\Rist_G(k)$.
Then $\{\pi_k\}_{k\ge n}$ is a decreasing sequence of non-empty finite sets and consequently their intersection is also non-empty.
Let $p$ be a prime in $\cap_{k\ge n} \, \pi_k$.
  
Consider a $p$-element $f\in\Rist_G(n)$ of maximum order, say $m$.
Since the order of $f$ is the least common multiple of the orders of the components of $\psi_n(f)$, we may assume without loss of generality that $f\in\Rist_G(u)$ for some vertex $u$ of the $n$th level.
By (ii) of \cref{lcm of orbit lengths}, there is a vertex $v$ in the tree $\T$ such that the $f$-orbit of $v$ has length $m$.
Of course, $v$ must be a descendant of $u$.
Now the choice of $p$ allows us to consider a non-trivial $p$-element $g$ in $\Rist_G(v)$.
Set $H=\langle g,f \rangle$.
By \cref{wreath}, we have $H\cong \langle g \rangle \wr \langle f \rangle$.
In particular, $H$ is a finite $p$-group.
On the other hand, by \cref{order in wreath}, $H$ contains an element of order greater than $m$.
This contradicts the choice of $m$, since $H\subseteq \Rist_G(n)$.
\end{proof}

Now we continue with our analysis of Engel elements in some wreath products.
Before proceeding, we introduce some further notation.
If $G$ is a group and $S\subseteq G$, we write $\Lc_G(S)$ to denote the set of all $x\in G$ that are left Engel elements on every element of $S$, that is, such that for all $s\in S$ there exists $n=n(s,x)$ such that $[s,_n x]=1$.
We define the set $\ol{\Lc}_G(S)$ in the obvious way, and if $x\in\ol{\Lc}_G(S)$ then the Engel degree of $x$ on $S$ is the maximum of the Engel degrees of $x$ on the elements of $S$.

\begin{Lemma}
\label{Comm}
Let $W=Y\wr X$ be a regular wreath product of two non-trivial groups, where $X$ is finite cyclic of order $n$, and let $\pi:W\rightarrow X$ be the natural projection.
Assume that $D=D_1\times \cdots \times D_n\ne 1$ is a subgroup of the base group of $W$, and that $w\in W$ is such that $\pi(w)$ is a generator of $X$.
Then the following hold:
\begin{enumerate}
\item
If $w\in\ol{\Lc}_W(D)$ has Engel degree $d$ on $D$ then $d\ge n$.
\item
If $w\in\Lc_W(D)$ then $C_D(w)$ is periodic.
\end{enumerate}
\end{Lemma}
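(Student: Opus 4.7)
The plan is to dispatch the two parts by rather different methods: part~(i) by a direct support-tracking calculation in the wreath product, and part~(ii) by reducing to \cref{Lieb}.

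For part~(i), since $D\ne 1$ I fix an index $i$ with $D_i\ne 1$ and an element $d_i\in D_i\setminus\{1\}$, and take $s\in D$ to be the element of the base group with $d_i$ at position $i$ and $1$ at all other positions. Writing $w=b\sigma$ with $b$ in the base group and $\sigma=\pi(w)$ an $n$-cycle, the explicit conjugation formula in a wreath product shows that $s^w$ has a unique non-trivial entry at position $\sigma^{-1}(i)$, equal to a conjugate of $d_i$. I then argue by induction on $k$ that $[s,_k w]$ has support contained in $\{i,\sigma^{-1}(i),\ldots,\sigma^{-k}(i)\}$, and that its entry at the freshly-added position $\sigma^{-k}(i)$ is a non-trivial conjugate of $d_i^{\pm 1}$. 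Because $\sigma$ is an $n$-cycle, the $n$ positions $\sigma^{-j}(i)$ are pairwise distinct for $j=0,1,\ldots,n-1$, so $[s,_k w]\ne 1$ for each $k\le n-1$; in particular the Engel degree $d$ of $w$ on $D$ is at least $n$.

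For part~(ii), take $d=(d_1,\ldots,d_n)\in C_D(w)$. Unfolding the centralizer condition $d^w=d$ gives $d_{\sigma(j)}=b_{\sigma(j)}d_jb_{\sigma(j)}^{-1}$, so each $d_j$ is a conjugate of $d_1$ in $Y$; in particular all $d_j$ share the order of $d_1$, and the order of $d$ in $W$ equals $|d_1|$. So it suffices to show that every such $d_1$ has finite order, and the plan is to reduce to \cref{Lieb}. Set $s=(d_1,1,\ldots,1)\in D$ and $H=\langle s,w\rangle\le W$. Expanding $w^n$ inside the base group gives its first coordinate as $B=b_1 b_{\sigma^{-1}(1)}\cdots b_{\sigma^{-(n-1)}(1)}$, and the centralizer relation forces $[d_1,B]=1$. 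This yields $s^{w^n}=s$ and, analogously, that $w^n$ centralizes each $s^{w^j}$; hence $\langle w^n\rangle$ is central in $H$. In the quotient $H/\langle w^n\rangle$ the image $\bar w$ has order exactly $n$, and the elements $\bar s,\bar s^{\bar w},\ldots,\bar s^{\bar w^{n-1}}$ pairwise commute (their preimages in $W$ have pairwise disjoint positional supports) and each generate a cyclic subgroup isomorphic to $\langle d_1\rangle$; this identifies $H/\langle w^n\rangle$ with the regular wreath product $\langle d_1\rangle\wr C_n$.

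To transfer the Engel hypothesis across this isomorphism, observe that every base element of $H$---any product $\prod_{j=0}^{n-1}(s^{w^j})^{a_j}$---lies in $D$, since writing $i_j=\sigma^{-j}(1)$, its $j$-th non-trivial entry $d_{i_j}^{a_j}$ belongs to $D_{i_j}$. Thus $w$ is left Engel on every base element of $H$, so $\bar w$ is left Engel on the base of $H/\langle w^n\rangle$; the identity $[h\bar w^i,\bar w]=[h,\bar w]^{\bar w^i}$ then extends this to all of $H/\langle w^n\rangle$. An application of \cref{Lieb} to the wreath product $\langle d_1\rangle\wr C_n$ now forces $\langle d_1\rangle$ to be a finite $p$-group, so $d_1$---and hence $d$---has finite order, as required. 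The delicate step is the wreath-product isomorphism above: concretely, the main obstacle will be verifying that $\prod_{j=0}^{n-1}\langle s^{w^j}\rangle\cap\langle w^n\rangle=1$ in $H$, which is what makes the natural map from $\langle d_1\rangle\wr C_n$ to $H/\langle w^n\rangle$ injective.
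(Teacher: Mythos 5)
Your part (i) is correct and is essentially the paper's own argument: both proofs take a base element supported at a single coordinate and track how commutation with $w$ pushes a non-trivial conjugate of that entry one step along the $n$-cycle, so that $[s,_{n-1} w]\ne 1$ and hence $d\ge n$.

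Part (ii) takes a genuinely different route, and it has a real gap, precisely at the step you flag. Your reduction hinges on identifying $H/\langle w^n\rangle$ with the regular wreath product $\langle d_1\rangle\wr C_n$ so that \cref{Lieb} applies, and that identification requires $P\cap\langle w^n\rangle=1$, where $P=\prod_{j=0}^{n-1}\langle s^{w^j}\rangle$. Nothing in the hypotheses forces this: $w^n$ lies in the base group of $W$ and its coordinates are products of the coordinates of $w$, which have no a priori relation to $\langle d_1\rangle$ but can perfectly well land inside $P$ (already for $Y$ abelian one can arrange $w^n\in P$). When the intersection is non-trivial, $H/\langle w^n\rangle$ is only a proper quotient of $\langle d_1\rangle\wr C_n$ by a subgroup of the base, the cyclic factors generated by the images of the $s^{w^j}$ may themselves collapse, and knowing that the image of $w$ is left Engel in such a quotient gives no information about the generator of the full wreath product, so \cref{Lieb} cannot be invoked. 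Establishing $P\cap\langle w^n\rangle=1$ under the Engel hypothesis is not a routine verification; it appears to need an argument of the same depth as the one you are trying to avoid. The paper's proof bypasses all of this with a direct computation: writing $g=(z_1,1,\ldots,1)$ and $[g,_k w]=(z_1^{m_{k,1}},\ldots,z_n^{m_{k,n}})$, the recursion $m_{k,i}=m_{k-1,i-1}-m_{k-1,i}$ together with a telescoping-sum argument shows that the exponents $m_{k,i}$ can never all coincide around the cycle, so two cyclically consecutive ones cannot both vanish and $[g,_k w]\ne 1$ for every $k$. The remaining ingredients of your part (ii) --- that all coordinates of an element of $C_D(w)$ are conjugate, that $P\subseteq D$ via the relation $d_{j+1}=d_j^{y_j}$, that $w^n$ is central in $H$, and the transfer of the Engel condition from the base of $H$ to all of $H$ --- do check out.
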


\begin{proof}
Write $w=(y_1,\ldots,y_n)x$, where $y_i\in Y$ and $x$ generates $X$.
We may assume that $x$ permutes the components of the base group according to the cycle
$(1\ 2\ \ldots \ n)$.

(i)
Without loss of generality, we may assume that $D_1\ne 1$.
Choose a non-trivial element $g=(y,1,\ldots,1)\in D$ and let $1\le i\le n-1$.
One can easily check by induction on $i$ that
\[
[g,_i w] = (y^{(-1)^i},\ldots,y^{y_1\ldots y_i},1,\ldots,1),
\]
where the last non-trivial component is in position $i+1$.
It follows that $[g,_{n-1} w]\ne 1$ and $d\ge n$.

(ii)
By contradiction, assume that $h=(z_1,\ldots,z_n)\in C_D(w)$ is of infinite order.
For notational convenience, set $z_0=z_n$ and $y_0=y_n$.
Then from the condition $h=h^w$ we get $z_i=z_{i-1}^{y_{i-1}}$ for all $i=1,\ldots,n$.
Hence all components of $h$ are conjugate and they are all of infinite order.

Now let $g=(z_1,1,\ldots,1)\in D$.
For every $k\ge 0$, let us write $[g,_k w]=(z_{k,1},\ldots,z_{k,n})$ and, as before, set $z_{k,0}=z_{k,n}$.
We claim that the following hold for every $k\ge 0$:
\begin{enumerate}
\item[(a)]
$z_{k,i}\in\langle z_i\rangle$ for every $i=1,\ldots,n$.
\item[(b)]
If we write $z_{k,i}=z_i^{m_{k,i}}$, then there exists $i\in\{1,\ldots,n\}$ such that $m_{k,i}\ne m_{k,i-1}$.
\end{enumerate}
We argue by induction on $k$.
The result is obvious for $k=0$, so assume $k\ge 1$ and that the claim is true for values less than $k$.
Since $[g,_k w] = [g,_{k-1} w]^{-1} [g,_{k-1} w]^w$, it follows that
\[
z_{k,i} = z_{k-1,i}^{-1} \, z_{k-1,i-1}^{y_{i-1}} = z_i^{-m_{k-1,i}} \, (z_{i-1}^{y_{i-1}})^{m_{k-1,i-1}}
= z_i^{m_{k-1,i-1}-m_{k-1,i}}
\]
for all $i=1,\ldots,n$.
This proves (a) and, if (b) does not hold, then
\[
m_{k-1,1}-m_{k-1,2} = m_{k-1,2}-m_{k-1,3} = \cdots = m_{k-1,n-1}-m_{k-1,n} = m_{k-1,n}-m_{k-1,1}.
\]
Now the sum of the $n-1$ first terms in this chain of equalities is the same as $n-1$ times the last one, i.e.\
\[
m_{k-1,1}-m_{k-1,n} = (n-1) (m_{k-1,n}-m_{k-1,1}).
\]
From this, it readily follows that
\[
m_{k-1,1} = m_{k-1,2} = m_{k-1,3} = \cdots = m_{k-1,n},
\]
which is contrary to the induction hypothesis.

Finally, observe that (b) above implies that $m_{k,i}$ and $m_{k,i-1}$ cannot both be zero.
Since $z_i$ and $z_{i-1}$ are of infinite order, we conclude that $[g,_k w]\ne 1$ for all $k\ge 1$ and consequently $w\not\in\Lc_W(D)$.
This contradiction completes the proof.
\end{proof}

\section{Left Engel elements in weakly branch groups}
\label{sec:left engel}

At this point, we can start combining all the machinery developed in Sections \ref{sec:automorphisms of trees} and \ref{sec:engel in wreath} in order to prove the main results of this paper.
In this section we consider left Engel elements.
The following is an expanded version of \cref{BranchTheoremW}.

\begin{Theorem}
\label{weakly branch case}
Let $G$ be a subgroup of $\Aut\T$ in which all rigid vertex stabilizers are non-trivial.
Then:
\begin{enumerate}
\item
If $f$ is a non-trivial left Engel element of finite order, and $\OO$ is a non-trivial totally splitting $f$-orbit, then for some prime number $p$ the length of $\OO$ is a $p$-power and $\Rist_G(\OO)$ is a
$p$-subgroup.
\end{enumerate}
If $G$ is furthermore weakly branch, then:
\begin{enumerate}
\setcounter{enumi}{1}
\item
If the set of finite order elements of $\Lc(G)$ is non-trivial then it is a $p$-set for some prime $p$, and
$\Rist_G(n)$ is a $p$-group for some $n\geq 1$.
\vspace{3pt}
\item
$\ol{\Lc}(G)=1$.
\end{enumerate}
\end{Theorem}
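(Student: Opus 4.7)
I would work in the reduced tree $\RR(\OO)$. By \cref{induced regular orbit}, the induced automorphism $x:=\Phi_\OO(f)$ has order $|\OO|$ and acts regularly on $\OO$. Fix $v\in\OO$ and any non-trivial $g\in\Rist_G(v)$, and put $y:=\Phi_\OO(g)$, a non-trivial element of $\Rist_{\Aut\RR(\OO)}(v)$. \cref{wreath} then identifies $W:=\langle y,x\rangle$ with the regular wreath product $\langle y\rangle\wr\langle x\rangle$. Because $f\in\Lc(G)$, its projection $x$ is left Engel in $G_\OO$ and hence in $W$, so Liebeck's \cref{Lieb} forces both $\langle x\rangle$ and $\langle y\rangle$ to be finite $p$-groups for some prime $p$. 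Since $|x|=|\OO|$, the prime $p$ depends only on $\OO$ and not on the chosen $g$, so $|\OO|$ is a $p$-power, every element of $\Rist_G(v)$ is a $p$-element (an infinite-order $g$ would make $\langle y\rangle$ infinite), and $\Rist_G(\OO)=\prod_{w\in\OO}\Rist_G(w)$ is a $p$-group.

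\textbf{Part (ii).} Given a non-trivial $f\in\Lc(G)$ of finite order, I would use \cref{f->fi} to obtain a fundamental system $V=\{v_1,\ldots,v_k\}\subseteq\LL_n$ whose orbits $\OO_i$ are non-trivial and totally splitting. Part (i) then supplies primes $p_i$ with $|\OO_i|$ a $p_i$-power and $\Rist_G(v_i)$ a $p_i$-group; since $G$ is spherically transitive, the stabilizers $\Rist_G(v_i)$ are all conjugate and non-trivial, forcing $p_1=\cdots=p_k=:p$. Hence $|f|=\lcm_i|\OO_i|$ is a $p$-power and $\Rist_G(n)=\prod_{w\in\LL_n}\Rist_G(w)$ is a $p$-group. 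If two finite-order elements of $\Lc(G)$ produced primes $p,q$ at levels $n_1,n_2$, then $\Rist_G(\max(n_1,n_2))\le\Rist_G(n_1)\cap\Rist_G(n_2)$ would be non-trivial and simultaneously a $p$- and $q$-group, forcing $p=q$.

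\textbf{Part (iii).} Suppose for contradiction that $f\in\ol{\Lc}(G)\setminus\{1\}$ has common Engel degree $d$. If $|f|=\infty$, the $f$-orbits in $V(\T)$ are finite but of unbounded lengths, so I pick one, $\OO$, of length $n>d$. In $\RR(\OO)$, $f_\OO$ induces the $n$-cycle $\sigma$ on the first level $\OO$, hence lies in $W:=Y\wr\langle\sigma\rangle$ with $Y:=\Aut\T_{\langle 1\rangle}$. Picking any non-trivial $g\in\Rist_G(v)$ for $v\in\OO$ and setting $D_i:=\langle g_\OO^{f_\OO^{i-1}}\rangle$, the groups $D_i$ sit in the $n$ distinct coordinates of the base $Y^n$ coming from $\Rist_{G_\OO}(f^{i-1}(v))$, so $D:=D_1\times\cdots\times D_n\le W$ is a non-trivial subgroup contained in $G_\OO$. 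Since $f_\OO\in\ol{\Lc}(G_\OO)$ with degree $\le d$, we have $f_\OO\in\ol{\Lc}_W(D)$ with degree $\le d$, and \cref{Comm}(i) gives $n\le d$, a contradiction. If instead $|f|<\infty$, apply part (i) to an orbit $\OO$ from a fundamental system to get $|\OO|=p^k$ and $\Rist_G(v)$ a $p$-group; for any non-trivial $g\in\Rist_G(v)$ the regular wreath product of \cref{wreath} together with the explicit Engel-degree formula of \cref{Lieb} bounds $\log_p|g|$ in terms of $d,p,p^k$. Hence $\Rist_G(v)$, and so $\Rist_G(n)$ by spherical transitivity, has finite exponent, contradicting \cref{rist infinite exponent}.

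\textbf{Anticipated obstacle.} The most delicate point is the infinite-order case of (iii): I need to verify that the cyclic action of $\sigma$ on the coordinates of $Y^n$ matches exactly how $f_\OO$ permutes the rigid stabilizers $\Rist_{G_\OO}(f^{i-1}(v))$, so that the $D_i$'s genuinely lie in distinct direct factors of the base group and \cref{Comm}(i) applies cleanly. The remaining work is quantitative bookkeeping of orders and Engel degrees, with \cref{rist infinite exponent} doing the heavy lifting in the finite-order case.
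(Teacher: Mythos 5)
Your proposal is correct and follows essentially the same route as the paper's proof: reduction to the reduced tree of a totally splitting orbit, \cref{wreath} plus \cref{Lieb} for parts (i)--(ii) and for the finite-order case of (iii), and \cref{Comm}(i) on a long orbit together with \cref{rist infinite exponent} for the rest of (iii). The only (harmless) deviations are that in (ii) you make explicit, via nesting of rigid level stabilizers, why the prime is the same for all finite-order elements of $\Lc(G)$ (the paper leaves this implicit), and in the infinite-order case of (iii) you build $D$ from conjugates of a single rigid-stabilizer element rather than taking all of $\Phi_{\OO}(\Rist_G(\OO))$, which \cref{Comm} accommodates equally well.
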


\begin{proof}
(i)
Denote the reduced tree $\RR(\OO)$ by $\RR$, and set $x=\Phi_{\OO}(f)$ and $H=G_{\OO}$.
We observe that $|x|=|\OO|$ by \cref{induced regular orbit}.
Consider now a vertex $v$ in $\OO$ and an arbitrary element $g\in\Rist_G(v)$, and set $y=\psi_v(g)$
(here $v$ is considered as a vertex in $\T$).
Then $h=\Phi_{\OO}(g)$ lies in $\Rist_H(v)$ and $\psi_v(h)=y$
(here $v$ is considered as a vertex in $\RR$).
By \cref{wreath}, we have $\langle h,x \rangle\cong \langle h \rangle \wr \langle x \rangle$.
Since $x\in\Lc(H)$, \cref{Lieb} implies that both $|y|$ and $|x|$ are $p$-powers for some prime $p$.
Thus $|g|$ and $|\OO|$ are $p$-powers.
Since $g\in\Rist_G(v)$ was arbitrary and $f$ acts transitively on $\OO$, we conclude that
$\Rist_G(\OO)$ is a $p$-group.

(ii)
Let again $f\in\Lc(G)$ be a non-trivial element of finite order.
By applying \cref{f->fi} to $f$, we obtain non-trivial totally splitting $f$-orbits $\OO_1,\ldots,\OO_k$, all lying on the same level $n$ of the tree, such that $|f|=\lcm(|\OO_1|,\ldots,|\OO_k|)$.
Let us fix $i\in\{1,\ldots,k\}$.
By (i), there exists a prime $p$ (in principle, depending on $i$) such that $|\OO_i|$ is a $p$-power and $\Rist_G(\OO_i)$ is a $p$-group.
Since $G$ acts now level transitively on $\T$, all rigid vertex stabilizers are isomorphic by
\eqref{conjugate of rist}.
It follows that $p$ is the same for all $i$ and consequently $\Rist_G(n)$ is a $p$-group.
Also the length of all orbits $\OO_1,\ldots,\OO_k$ is a power of $p$ and $f$ is a $p$-element.

(iii)
By contradiction, assume that $f\in\ol{\Lc}(G)$, $f\ne 1$.
Let $d$ be the Engel degree of $f$.

Assume first that $f$ is of finite order.
Let $\OO$ be a non-trivial totally splitting $f$-orbit.
Define $x$ and $y$ as in the proof of (i), and recall that these are $p$-elements.
By \cref{Lieb},
\[
d \ge |x|+\frac{1}{p}(\log_p |y|-1)(p-1)|x|.
\]
On the other hand, since the exponent of $\Rist_G(n)$ is not finite by \cref{rist infinite exponent}, the order of $y$ is unbounded.
This is a contradiction.

Assume now that the order of $f$ is infinite.
By \cref{lcm of orbit lengths}, there exists an $f$-orbit $\OO$ of length $\ell>d$.
Let once again $\RR$ be the reduced tree $\RR(\OO)$, and set $h=\Phi_{\OO}(f)$ and $H=G_{\OO}$.
Then $h\in S\langle x \rangle$, where $S$ is the first level stabilizer in $\Aut \RR$ (i.e.\ the stabilizer of
$\OO$) and $x$ is a rooted automorphism corresponding to a cycle of length $\ell$.
Observe that $S\langle x \rangle$ is isomorphic to a regular wreath product $W=Y\wr X$, where $Y$ is the stabilizer in $\RR$ of a vertex in $\OO$ and $X=\langle x \rangle$ is cyclic of order $\ell$.
Under this isomorphism, $h$ corresponds to an element $w$ with $\pi(w)=x$.
Also $h$ lies in $\ol{\Lc}_H(D)$ with Engel degree at most $d$, where $D=\Phi_{\OO}(\Rist_G(\OO))$ corresponds to a non-trivial direct product inside the base group of $W$.
Now, by applying (i) of \cref{Comm}, we get $d\ge \ell$, which is a contradiction.
This completes the proof of (iii).
\end{proof}

Now we proceed to prove \cref{BranchTheorem}.

\begin{Theorem}
\label{branch case}
Let $G$ be a branch group.
If $\Lc(G)\ne 1$ then $G$ is periodic and there exists a prime $p$ such that:
\begin{enumerate}
\item
$\Lc(G)$ consists of $p$-elements.
\item
$G$ is virtually a $p$-group.
\end{enumerate}
\end{Theorem}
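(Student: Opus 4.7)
The plan is to combine Theorem~\ref{weakly branch case} with the defining feature of branch groups, namely that $|G:\Rist_G(n)|<\infty$ for every $n\in\N$. The architecture of the argument is as follows: once a non-trivial element of finite order is exhibited in $\Lc(G)$, part (ii) of Theorem~\ref{weakly branch case} produces a prime $p$ and a level $n$ such that $\Rist_G(n)$ is a $p$-group. The branch hypothesis then gives that $G$ is virtually a $p$-group, which is (ii); moreover, for every $g\in G$ the power $g^{[G:\Rist_G(n)]}$ lies in the $p$-group $\Rist_G(n)$, so $g$ has finite order. Hence $G$ is periodic, every element of $\Lc(G)$ is of finite order, and a second application of part (ii) of Theorem~\ref{weakly branch case} delivers (i).

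The whole content of the theorem is thus to show that $\Lc(G)\ne 1$ forces $\Lc(G)$ to contain a non-trivial element of finite order. My plan is to take a non-trivial $f\in\Lc(G)$ and argue that if $|f|=\infty$ then a contradiction arises. Since $|f|$ is infinite, by (ii) of \cref{lcm of orbit lengths} the $f$-orbit lengths are unbounded, so I can fix an $f$-orbit $\OO$ of length $\ell$ as large as needed, pass to the reduced tree $\RR(\OO)$, and obtain the induced automorphism $x=\Phi_{\OO}(f)$, which acts on the first level of $\RR(\OO)$ as a cycle of length $\ell$. In the wreath-product description of the setwise stabilizer of $\OO$ in $\Aut\RR(\OO)$, the element $x$ still lies in $\Lc_{G_{\OO}}(D)$, where $D=\Phi_{\OO}(\Rist_G(\OO))$ is a non-trivial direct product of $\ell$ isomorphic copies of $\Rist_G(v_1)$ sitting inside the base group. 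By (ii) of \cref{Comm}, the centralizer $C_D(x)$ must then be periodic.

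The main obstacle is to convert the periodicity of $C_D(x)$ into a non-trivial finite-order element of $\Lc(G)$ (or into an outright contradiction). A direct computation identifies $C_D(x)$ with the subgroup of $D_1\cong\Rist_G(v_1)$ fixed by the twist $y=y_1y_2\cdots y_\ell$ read off from the wreath-product expression for $x$. My strategy is to exploit the freedom in the choice of $\OO$: by taking $\OO$ deep in the tree and using $|G:\Rist_G(n)|<\infty$ to combine $f$ with suitable elements, one should be able to arrange that the twist $y$ is trivial, in which case $C_D(x)$ is the full diagonal copy of $\Rist_G(v_1)$, forcing the entire rigid vertex stabilizer to be periodic. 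From a non-trivial finite-order element $g\in\Rist_G(v_1)$ one then invokes \cref{wreath} to build a subgroup $\langle g,f\rangle\cong\langle g\rangle\wr\langle\text{suitable power}\rangle$ and extracts a non-trivial finite-order element of $\Lc(G)$, producing the required contradiction. This final extraction step is the subtle one, since $\Lc(G)$ is not a subgroup in general and the branch hypothesis must be used in a way that preserves the Engel property.
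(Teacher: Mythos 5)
Your overall architecture (reduce to exhibiting a non-trivial finite-order element of $\Lc(G)$, then invoke part (ii) of \cref{weakly branch case} together with $|G:\Rist_G(n)|<\infty$) matches the paper, and your setup of the infinite-order case --- pass to a reduced tree $\RR(\OO)$, view $h=\Phi_{\OO}(f)$ inside a wreath product, and apply (ii) of \cref{Comm} to conclude that $C_D(h)$ is periodic --- is also the paper's route. The gap is in how you propose to extract a contradiction from the periodicity of $C_D(h)$. Your plan to ``arrange that the twist $y$ is trivial'' by combining $f$ with suitable elements is not viable: the twist is the section of $f^{\ell}$ at a vertex of $\OO$, an intrinsic feature of $f$, and any modification of $f$ by elements of $\Rist_G(n)$ destroys the only hypothesis you have, namely $f\in\Lc(G)$ (which is not a subgroup and is not stable under such products). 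Going deeper in the tree does not help either. The idea you are missing is much simpler and is where the branch hypothesis actually enters: since $\Rist_G(n)$ is normal of finite index, some power $f^k$ lies in $\Rist_G(n)$, so $h^k=\Phi_{\OO}(f^k)$ lies in $D$ and obviously centralizes $h$; if $h$ has infinite order, $h^k$ is then an infinite-order element of $C_D(h)$, contradicting (ii) of \cref{Comm} directly, with no need to control the twist at all.

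Two further points. First, you do not treat the case where $h=\Phi_{\OO}(f)$ has \emph{finite} order even though $f$ does not (possible, since $\Phi_{\OO}$ has non-trivial kernel on the setwise stabilizer of $\OO$); there the $h^k$ trick gives nothing, and the paper instead applies part (i) of \cref{weakly branch case} to $h$ to conclude that some rigid vertex stabilizer, hence $\Rist_G(m)$ by level transitivity, hence $G$ by finite index, is periodic --- contradicting $|f|=\infty$. Second, your final ``extraction'' step is both unnecessary and unsound: once you know some $\Rist_G(v)$ is periodic you are already done (level transitivity plus finite index make $G$ periodic, contradicting $|f|=\infty$), and there is in any case no reason why elements of $\langle g,f\rangle$ produced via \cref{wreath} should be left Engel in $G$.
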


\begin{proof}
It suffices to show that $\Lc(G)$ does not contain any  elements of infinite order.
Indeed, since $\Lc(G)\ne 1$, the theorem then follows immediately from (ii) of \cref{weakly branch case}, by taking into account that $|G:\Rist_G(n)|$ is always finite if $G$ is a branch group.

Let us assume then that $f\in\Lc(G)$ is of infinite order.
Consider an $f$-orbit $\OO$ in $V(\T)$ of length $\ell\ge 2$, and let $n$ be the level of $\T$ containing $\OO$.
Set $\RR=\RR(\OO)$, $h=\Phi_{\OO}(f)$ and $H=G_{\OO}$.
Then for every vertex $v\ne\emptyset$ of $\RR$ we have $\Phi_{\OO}(\Rist_G(v))\subseteq \Rist_H(v)$, and consequently all rigid vertex stabilizers of $H$ are non-trivial.
Also  $h\in \Lc(H)$.

If $h$ has finite order, then by (i) of \cref{weakly branch case}, the rigid stabilizer in $H$ of some vertex $v\ne\emptyset$ of $\RR$ is periodic.
Consequently $\Rist_G(v)$ is periodic, and by level transitivity of $G$, also
$\Rist_G(n)$ is periodic.
Since $|G:\Rist_G(n)|$ is finite, it follows that $G$ itself is periodic, which is a contradiction.

Assume now that the order of $h$ is infinite.
As in the proof of (iii) of \cref{weakly branch case}, $h$ lies in $S\langle x \rangle$, where $S$ is the first level stabilizer of $\Aut\RR$, and $x$ is a rooted automorphism corresponding to a cycle of length $\ell$.
We can identify $S\langle x \rangle$ with the regular wreath product $W=Y\wr X$, where
$X=\langle x \rangle$ is cyclic of order $\ell$ and $h$ maps onto $x$.
Then $h\in \Lc_W(D)$, where
\[
D=\Phi_{\OO}(\Rist_G(\OO))=\Phi_{\OO}(\Rist_G(n))
\]
corresponds to a non-trivial direct product inside the base group of $W$.
By (ii) of \cref{Comm}, $C_D(h)$ is periodic.
However, since $G$ is branch we have $f^k\in\Rist_G(n)$ for some $k\ge 1$ and then
$h^k=\Phi_{\OO}(f^k)\in D$.
It follows that $h^k\in C_D(h)$ is an element of infinite order, which is a contradiction.
\end{proof}

Now we can apply Theorems A and B to some distinguished subgroups of $\Aut\T$, and obtain the part of Corollary D regarding left Engel elements.
Before proceeding, we will introduce some of the groups that appear in the following result.

First of all, the Hanoi Tower group $\HH$ is the subgroup of $\Aut\T_3$ generated by the three automorphisms $a$, $b$ and $c$ given by the following recursive formulas:
\begin{align*}
a &= (1,1,a) (1\ 2),
\\
b &= (1,b,1) (1\ 3),
\\
c &= (c,1,1) (2\ 3).
\end{align*}
This group models the popular Hanoi Tower puzzle on $3$ pegs.

On the other hand, given an odd prime $p$ and a non-trivial subspace $\mathbf{E}$ of $\F_p^{p-1}$, we define the multi-GGS group (GGS standing for Grigorchuk, Gupta, and Sidki) $G_{\mathbf{E}}$ as the following subgroup of $\Aut\T_p$.
The group $G_{\mathbf{E}}$ is generated by the rooted automorphism $a$ of order $p$ corresponding to the $p$-cycle $(1\ 2\ \ldots \ p)$, and by the elementary abelian $p$-subgroup $B$ consisting of all automorphisms $b_{\mathbf{e}}$, with $\mathbf{e}=(e_1,\ldots,e_{p-1})\in \mathbf{E}$, defined recursively via
\begin{equation}
\label{b_e}
b_{\mathbf{e}} = (a^{e_1},\ldots,a^{e_{p-1}},b_{\mathbf{e}}).
\end{equation}
If $\dim \mathbf{E}=1$ then $G_{\mathbf{E}}$ is simply called a GGS group.
Multi-GGS groups are usually presented by giving a basis $(\mathbf{e}_1,\ldots,\mathbf{e}_r)$ of
$\mathbf{E}$ and defining $b_i\in B$ from $\mathbf{e}_i$ as in \eqref{b_e} for each $i=1,\ldots,r$, so that
$G_{\mathbf{E}}=\langle a,b_1,\ldots,b_r \rangle$.
We refer the reader to the paper \cite{multiggs} by Alexoudas, Klopsch, and Thillaisundaram for general facts about multi-GGS groups.
Multi-GGS groups are infinite and provide a wealth of examples giving a negative answer to the General Burnside Problem.
For instance, the famous Gupta-Sidki $p$-group is the GGS group $\langle a,b \rangle$ with $b$ corresponding to the vector $\mathbf{e}=(1,-1,0,\ldots,0)$.
In general, a multi-GGS group is periodic if and only if $\mathbf{E}$ is contained in the hyperplane of
$\F_p^{p-1}$ given by the equation $e_1+\cdots +e_{p-1}=0$ \cite[Theorem 3.2]{multiggs}.
On the other hand, multi-GGS groups are known to be branch unless
$\mathbf{E}=\langle (1,\ldots,1) \rangle$ consists of constant vectors, in which case it is weakly branch
\cite[Proposition 3.7]{multiggs}.

\begin{Corollary}
In all the following groups, the only left Engel element is the identity:
\begin{enumerate}
\item
Every infinitely iterated wreath product of finite transitive permutation groups of degree at least $2$.
In particular, $\Aut\T$ and $\Gamma_p$, for $p$ a prime.
\item
The group $\FF$ of all finitary automorphisms of $\T$, provided that the sequence $\overline d$ defining $\T$ contains infinitely many terms greater than $2$.
\item
All non-periodic multi-GGS groups $G_{\mathbf{E}}$, i.e.\ those with at least one vector
$\mathbf{e}\in\mathbf{E}$ having non-zero sum in $\F_p$.
\item
The Hanoi Tower group $\HH$.
\end{enumerate}
\end{Corollary}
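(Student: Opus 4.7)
For each of the four parts, the strategy is to reduce to an application of \cref{BranchTheoremW} or \cref{BranchTheorem}, after verifying the relevant structural hypothesis (branch or weakly branch) and either establishing non-periodicity or ruling out that a rigid stabilizer is a $p$-group.

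Parts (i) and (iv) follow a common pattern: once $G$ is shown to be branch and to contain an element of infinite order, \cref{BranchTheorem} immediately forces $\Lc(G)=1$. For (i), an infinitely iterated wreath product $\cdots\wr W_3\wr W_2\wr W_1$ of finite transitive groups of degree $\ge 2$ is spherically transitive by transitivity of each $W_i$, and its rigid stabilizer $\Rist_G(n)$ coincides with $\St_G(n)$ and has finite index; an odometer-type element of infinite order can then be built by placing a non-trivial label of $W_i$ at a single vertex on each level, following the classical recursion $\tau=(1,\ldots,1,\tau)\sigma$. This covers $\Aut\T$ and $\Gamma_p$. For (iv), the Hanoi Tower group $\HH$ is well known to be branch, and $|ab|=\infty$, so \cref{BranchTheorem} applies.

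For (ii), the finitary group $\FF$ is branch: for a finitary automorphism, fixing all vertices of $\LL_n$ forces its support into the level-$n$ subtrees, so $\St_\FF(n)=\Rist_\FF(n)$ has finite index. Since $\FF$ is locally finite, every element of $\Lc(\FF)$ has finite order. By \cref{BranchTheoremW}(ii), a non-trivial $\Lc(\FF)$ would force $\Rist_\FF(n)$ to be a $p$-group for some $n$. But the hypothesis furnishes some $m\ge n$ with $d_{m+1}>2$, and the rooted-like finitary automorphisms of $\T_v$ for $v\in\LL_m$ realize a copy of $\Sym(d_{m+1})$ inside $\Rist_\FF(n)$, which is not a $p$-group for $d_{m+1}\ge 3$ — a contradiction.

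For (iii), I would split according to whether $G_\mathbf{E}$ is branch. By \cite[Proposition 3.7]{multiggs} this holds unless $\mathbf{E}=\langle(1,\ldots,1)\rangle$, and by \cite[Theorem 3.2]{multiggs} non-periodicity of $G_\mathbf{E}$ is equivalent to the stated vector-sum condition; thus the branch case follows directly from \cref{BranchTheorem}. The genuinely hard case — the main obstacle — is the weakly branch, non-branch case $\mathbf{E}=\langle(1,\ldots,1)\rangle$, where \cref{BranchTheorem} is unavailable. To exclude finite-order left Engel elements here, the idea is to produce an infinite-order element inside some $\Rist_{G_\mathbf{E}}(n)$ (for instance via iterated commutators of $b$ with its $a$-conjugates, descending through the recursion $b=(a,\ldots,a,b)$), ruling out $p$-group-ness of $\Rist_{G_\mathbf{E}}(n)$ and triggering \cref{BranchTheoremW}(ii). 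Excluding infinite-order elements is trickier: the plan is to reduce to a non-trivial totally splitting orbit via \cref{f->fi}, use \cref{Comm}(ii) to force $C_D(h)$ to be periodic, and then deploy a case-specific calculation — mimicking the final step of the proof of \cref{branch case} but replacing its branch-property input with an explicit argument showing that a power of the candidate Engel element must land in $D$ as an element of infinite order.
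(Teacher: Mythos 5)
Parts (i), (ii) and (iv) of your proposal are correct and essentially follow the paper's route: the paper also builds the infinite-order element in (i) by placing a non-trivial label along a path of vertices, each moved by the label at its parent, and also reduces (iv) to the branch property of $\HH$ plus $|ab|=\infty$. In (ii) you take a slightly different (and perfectly valid) exit: you invoke \cref{BranchTheoremW}(ii) and exhibit a copy of $\Sym(d_{m+1})$ with $d_{m+1}\ge 3$ inside $\Rist_{\FF}(n)=\St_{\FF}(n)$, whereas the paper invokes \cref{BranchTheorem}(ii) and rules out ``virtually a $p$-group'' by constructing $2$- and $3$-subgroups of unbounded order; both arguments use the same hypothesis on $\overline d$ in the same way.

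The genuine gap is in part (iii), precisely in the case you correctly identify as the obstacle: $\mathbf{E}=\langle(1,\ldots,1)\rangle$, where $G_{\mathbf{E}}$ is weakly branch but not branch (note that the constant vector has sum $p-1\ne 0$, so this group \emph{is} non-periodic and must be covered). Your plan for this case is only a sketch, and its two key steps are not secured. First, to apply \cref{BranchTheoremW}(ii) you would need an element of infinite order inside some $\Rist_{G_{\mathbf{E}}}(n)$; \cref{rist infinite exponent} only gives infinite exponent, which is compatible with $\Rist_{G_{\mathbf{E}}}(n)$ being a $p$-group, and you give no actual construction of such an element. Second, and more seriously, your plan to ``mimic the final step of the proof of \cref{branch case}'' breaks exactly where that proof uses the branch hypothesis: there, $f^k\in\Rist_G(n)$ because $|G:\Rist_G(n)|<\infty$, and for a weakly branch non-branch group there is no reason any power of $f$ lands in the rigid stabilizer; you acknowledge this needs ``a case-specific calculation'' but do not supply one. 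The paper avoids all of this by a one-line citation: for $\mathbf{E}=\langle(1,\ldots,1)\rangle$ the group is a fractal subgroup of a Sylow pro-$p$ subgroup with $|G':\St_G(1)'|=\infty$, so $\Lc(G_{\mathbf{E}})=1$ by Theorem~7 of \cite{albert}. Without either that citation or a completed version of your sketch, part (iii) is not proved.
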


\begin{proof}
(i)
For every $n\in\N$, let $K_n$ be a finite transitive permutation group of degree $d_n\ge 2$, and let $W$ be the iterated wreath product of all these groups.
Let $\T$ be the spherically homogeneous rooted tree corresponding to the sequence
$\overline d=\{d_n\}_{n\in\N}$.
Then $W$ is isomorphic to the subgroup $K$ of $\Aut\T$ consisting of all automorphisms whose labels at level $n$ are elements of $K_{n+1}$.
Observe that $K$ is a branch group, since every $K_n$ is transitive and obviously $\Rist_K(n)=\St_K(n)$ in this case.

According to Theorem B, we only need to construct an element of infinite order in $K$ to conclude that
$\Lc(W)=1$.
To this purpose, we choose an infinite sequence $\{k_n\}_{n\in\N}$ of non-trivial permutations
$k_n\in K_n$, and an infinite sequence $\{v_n\}_{n\in\N\cup\{0\}}$ of vertices, where $v_n\in\LL_n$ and $k_n(v_n)\ne v_n$.
Also, let $\OO_n$ denote the orbit of $v_n$ under $\langle k_n \rangle$ and set $\ell_n=|\OO_n|$.

Now we define $f$ to be the automorphism of $\T$ having label $k_{n+1}$ at vertex $v_n$ for all
$n\in\N\cup\{0\}$.
We claim that the length of the $f$-orbit of $v_n$ is $\ell_1 \ldots \ell_n$ for all $n\in\N$.
Since $\ell_i\ge 2$ for every $i$, we conclude that $f$ is of infinite order by using (ii) of
\cref{lcm of orbit lengths}.

We prove the claim by induction on $n$.
The result is obvious for $n=1$, since $f$ behaves as $k_1$ on the first level of $\T$.
Then $f^{\ell_1}$ fixes all vertices in the orbit $\OO_1$, and a simple calculation shows that on all those vertices the section of $f^{\ell_1}$ coincides with the section of $f$ at $v_1$, let us call it $g$.
Since $v_n$ lies at level $n-1$ for $g$, by induction the length of the $g$-orbit of $v_n$ is
$\ell_2 \ldots \ell_n$.
From this one can readily see that the $f$-orbit of $v_1$ has length $\ell_1 \ldots \ell_n$, as desired.

(ii)
Obviously, $\FF$ is spherically transitive and $\Rist_{\FF}(n)=\St_{\FF}(n)$ for all $n\in\N$.
Thus $\FF$ is a branch group.
In this case, all elements of $\FF$ are of finite order, but we still get $\Lc(\FF)=1$ from Theorem B, because there is no prime $p$ for which $\FF$ is virtually a $p$-group.
Indeed, assume for a contradiction that $N$ is a normal $p$-subgroup of $\FF$ of finite index $m$.
Under this assumption, if $H$ is a $q$-subgroup of $\FF$ for a prime $q\ne p$, the order of $H$ cannot exceed $m$.
However, as we see in the next paragraph, the condition on the sequence $\overline d$ implies that
$\FF$ has $2$-subgroups and $3$-subgroups of arbitrarily high order, and we get a contradiction.

Consider the following subset of $\N$:
\[
S = \{ n\in\N \mid d_n\ge 3 \}.
\]
By hypothesis, $S$ is infinite.
For every $n\in S$, let $H_n$ be the subgroup of $\FF$ consisting of all automorphisms with labels lying
in $\langle (1\ 2) \rangle$ for all vertices in $\LL_n$ and trivial labels elsewhere.
Then the order of $H_n$ is $2^{\,d_1\ldots d_n}$, which tends to infinity as $n\to\infty$.
We can define similarly a subgroup $J_n$ of order $3^{d_1\ldots d_n}$ for every $n\in S$, by using the $3$-cycle $(1\ 2\ 3)$.
Thus we get $2$-subgroups and $3$-subgroups of $\FF$ of arbitrarily high order, as desired.

(iii)
If $\mathbf{E}=\langle (1,\ldots,1) \rangle$ then $\Lc(G_{\mathbf{E}})=1$ by \cite[Theorem 7]{albert}.
Otherwise $G_{\mathbf{E}}$ is a branch group, and the result follows immediately from Theorem B and from the characterisation of periodic multi-GGS groups given above.

(iv)
The Hanoi Tower group is known to be a branch group \cite[Theorem 5.1]{Hanoi}.
Let us see that the element $ab=(b,1,a)(1\ 2\ 3)$ is of infinite order.
Assume, for a contradiction, that $|ab|=k$ is finite.
Observe that $k=3\ell$ for some $\ell$, since $ab$ has order $3$ modulo the first level stabilizer.
But then
\[
(ab)^{3\ell} = ((ba)^{\ell},(ab)^{\ell},(ab)^{\ell})
\]
implies that $(ab)^{\ell}=1$, which is a contradiction.
\end{proof}

\begin{Corollary}
Let $p$ be a prime and let $\FF_p$ be the group of $p$-finitary automorphisms of $\Aut\T_p$.
Then the following hold:
\begin{enumerate}
\item
$\Lc(\FF_p)=\FF_p$.
\item
$\ol{\Lc}(\FF_p)=1$.
\end{enumerate}
\end{Corollary}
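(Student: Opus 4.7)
The proof splits naturally into two unrelated arguments. For (i), the key input is the observation in \cref{sec:automorphisms of trees} that $\FF_p$ is locally a finite $p$-group. Thus for any $x,g\in\FF_p$ the two-generator subgroup $\langle x,g\rangle$ is a finite $p$-group, hence nilpotent of some class $c=c(x,g)$, which forces $[g,_c x]=1$. As $g$ ranges over $\FF_p$ this shows $x\in \Lc(\FF_p)$, and since $x$ was arbitrary we conclude $\Lc(\FF_p)=\FF_p$.

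For (ii), the plan is to reduce to part (iii) of \cref{weakly branch case}, so it suffices to verify that $\FF_p$ is weakly branch. Spherical transitivity holds because at the root the labels of elements of $\FF_p$ range over the transitive subgroup $\langle\sigma\rangle\le S_p$, and an induction on the level (using that for any vertex $v$ at level $n$ the subtree $\T_v$ is again a $p$-adic tree on which $\Rist_{\FF_p}(v)$ acts as the full group of $p$-finitary automorphisms) propagates transitivity to every level. For non-triviality of rigid vertex stabilizers, for each $v$ I can exhibit the finitary automorphism whose portrait has a single non-trivial label, placed at an arbitrary descendant of $v$ and equal to $\sigma$; this element lies in $\Rist_{\FF_p}(v)\smallsetminus\{1\}$. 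With both conditions verified, \cref{weakly branch case}(iii) immediately gives $\ol{\Lc}(\FF_p)=1$.

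There is no genuine obstacle here, since both halves follow quickly from machinery already established. The point worth emphasising is the sharp contrast between (i) and (ii): $\FF_p$ is locally nilpotent, so every element is a (possibly unbounded) left Engel element, yet no non-trivial element is a bounded left Engel element. The mechanism behind the collapse in (ii) is \cref{rist infinite exponent}, which forbids the rigid level stabilizers of a weakly branch group from having finite exponent; this is precisely what prevents a uniform Engel degree from existing, and so $\ol{\Lc}(\FF_p)=1$ despite local nilpotence.
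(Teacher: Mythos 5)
Your proposal is correct and follows essentially the same route as the paper: part (i) from $\FF_p$ being locally a finite $p$-group (hence every two-generated subgroup is nilpotent), and part (ii) by checking that $\FF_p$ is weakly branch and invoking \cref{weakly branch case}(iii). The paper is only terser, noting directly that $\Rist_{\FF_p}(n)=\St_{\FF_p}(n)$ together with spherical transitivity, which subsumes your explicit construction of non-trivial elements in the rigid vertex stabilizers.
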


\begin{proof}
Since $\FF_p$ is locally a finite $p$-group, (i) is clear.
On the other hand, since $\FF_p$ is spherically transitive and $\Rist_{\FF_p}(n)=\St_{\FF_p}(n)$ for all $n$, (ii) follows directly from Theorem A.
\end{proof}

\section{Right Engel elements in weakly branch groups}
\label{sec:right engel}

In this final section, we prove Theorem C, regarding right Engel elements in weakly branch groups, and then we apply it to show that $\Rc(G)=1$ whenever $G$ is a GGS group.
Before proceeding, we need a straightforward lemma.

\begin{Lemma}
\label{formula for R(G)}
Suppose that $\T$ has $d$ vertices in the first level, and consider $x,y\in\Aut\T$ such that:
\begin{enumerate}
\item
$y=az$, where $a$ is the rooted automorphism corresponding to the cycle $(1\ 2\ \ldots \ d)$ and $z\in\St(1)$ is given by $\psi(z) = (z_1,\ldots,z_d)$.
\item
$x\in\St(1)$ is given by $\psi(x) = (x_1,\ldots,x_d)$.
\end{enumerate}
Then, for all $k\ge 2$, we have
\[
\psi([y,_k x])
=
([(x_d^{-1})^{z_1},_{k-1} x_1]^{x_1}, \dots, [(x_{d-1}^{-1})^{z_d},_{k-1} x_d]^{x_d}).
\]
\end{Lemma}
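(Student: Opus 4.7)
The plan is to prove the formula by induction on $k \ge 2$, using throughout the observation that $[y,x] \in \St(1)$, and hence so does every iterate $[y,_k x]$, which lets me compute via the isomorphism $\psi$ componentwise.

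For the base case $k=2$, I would begin by computing $[y,x] = y^{-1}x^{-1}yx = z^{-1} (x^{-1})^{a} z \, x$. Since $a$ is rooted (hence has trivial sections at level $1$) and $x \in \St(1)$, the rule $(f^g)_{g(v)} = (f_v)^{g_v}$ applied at each level-$1$ vertex gives $\psi(x^a) = (x_d, x_1, \ldots, x_{d-1})$ — a cyclic shift dictated by $a = (1\ 2\ \ldots\ d)$. Multiplying componentwise in $\St(1)$ then yields
\[
\psi([y,x]) = \bigl((x_d^{-1})^{z_1} x_1,\ (x_1^{-1})^{z_2} x_2,\ \ldots,\ (x_{d-1}^{-1})^{z_d} x_d\bigr).
\]
In particular $[y,x] \in \St(1)$, so $[y,_2 x]$ is again computed componentwise. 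Applying the identity $[uv,w] = [u,w]^v[v,w]$ to each coordinate, together with $[x_j,x_j]=1$, collapses the $j$-th entry to $[(x_{j-1}^{-1})^{z_j}, x_j]^{x_j}$ (with indices read mod $d$), which is exactly the claimed formula at $k-1=1$.

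For the inductive step, suppose the formula holds at level $k$. Because $[y,_k x] \in \St(1)$ and $x \in \St(1)$, the commutator $[y,_{k+1}x]$ is once more computed componentwise, so the $j$-th entry of $\psi([y,_{k+1}x])$ equals
\[
\bigl[[(x_{j-1}^{-1})^{z_j},_{k-1} x_j]^{x_j},\ x_j\bigr].
\]
Using $x_j = x_j^{x_j}$ to pull the conjugation out of the commutator yields $[(x_{j-1}^{-1})^{z_j},_k x_j]^{x_j}$, completing the induction. The only real subtlety in the whole argument is getting the direction of the cyclic shift correct in the computation of $\psi(x^a)$ — once that is pinned down, the rest is routine manipulation inside $\St(1)$, and the conjugation by $x_j$ propagates cleanly precisely because all iterates beyond the first live in $\St(1)$.
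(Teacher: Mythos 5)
Your proof is correct and follows essentially the same route as the paper: compute $\psi([y,x])$ via $(x^{-1})^{az}x$ and the cyclic-shift action of $a$ on sections, then observe that all further commutators with $x$ are taken componentwise inside $\St(1)$. The only difference is that you carry out the componentwise induction (using $[uv,w]=[u,w]^v[v,w]$ and $[u^{x_j},x_j]=[u,x_j]^{x_j}$) explicitly, where the paper simply asserts that the result ``follows immediately.''
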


\begin{proof}
We have
\begin{align*}
\psi([y, x])
&=
\psi((x^{-1})^{y}x) = \psi((x^{-1})^{a})^{\psi(z)}\psi(x)
\\
&=
((x_d^{-1})^{z_{1}}x_{1}, (x_{1}^{-1})^{z_{2}}x_2, \dots,  (x_{d-1}^{-1})^{z_d}x_d).
\end{align*}
Now the result follows immediately by observing that taking subsequent commutators with $x$ is performed componentwise.
\end{proof}

Now we are ready to prove \cref{theoremrightengel}.

\begin{Theorem}
\label{weakly branch R(G)=1}
Let $G$ be a weakly branch group.
If $\Rist_G(n)$ is not an Engel group for all $n\in\N$, then $\Rc(G)=1$.
\end{Theorem}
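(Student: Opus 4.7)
The plan is as follows. Suppose, for contradiction, that $g \in \Rc(G)$ with $g \neq 1$. Then $g$ moves some vertex, so it admits a non-trivial orbit $\OO = \{v_1,\dots,v_\ell\}$ (with $\ell \geq 2$) contained in some level $\LL_n$ of $\T$. Passing to the reduced tree $\RR(\OO)$, the induced automorphism $h = \Phi_{\OO}(g) \in \Aut\RR(\OO)$ acts as an $\ell$-cycle on the first level, so I may decompose $h = az$ in the form required by \cref{formula for R(G)}, with $a$ the rooted $\ell$-cycle and $z \in \St(1)$ satisfying $\psi(z) = (z_1,\dots,z_\ell)$. Since $g \in \Rc(G)$ and $\Phi_{\OO}$ is a group homomorphism on $\St_G(\OO)$, the element $h$ is right Engel on every element of the base subgroup $D := \Phi_{\OO}(\Rist_G(\OO)) = D_1 \times \cdots \times D_\ell$, where $D_i := \psi_{v_i}(\Rist_G(v_i))$. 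Each $D_i$ is non-trivial since $G$ is weakly branch, and from \eqref{conjugate of rist} combined with the section formula $\psi_{v_i}(f^g) = \psi_{v_{i-1}}(f)^{z_i}$ one obtains $D_i = D_{i-1}^{z_i}$.

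Next, I fix any index $i$ and pick arbitrary $u, \tilde w \in D_{i-1}$. Define $x \in D$ by $x_{i-1} = u$, $x_i = \tilde w^{z_i} \in D_i$, and $x_j = 1$ for all $j \notin \{i-1, i\}$. Applying \cref{formula for R(G)}, one checks that for every $k \geq 2$ all coordinates of $[h,_k x]$ vanish except possibly the $i$-th, which equals
\[
[(u^{-1})^{z_i},_{k-1} \tilde w^{z_i}]^{\tilde w^{z_i}} = [u^{-1},_{k-1} \tilde w]^{z_i \tilde w^{z_i}}.
\]
The right-Engel hypothesis on $h$ forces this to vanish for some $k$, hence $[u^{-1},_{k-1} \tilde w] = 1$. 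Since $u$ and $\tilde w$ were arbitrary in $D_{i-1}$, this means $D_{i-1}$ is an Engel group.

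Finally, via the isomorphism $\psi_{v_{i-1}}\colon \Rist_G(v_{i-1})\to D_{i-1}$, the rigid stabilizer $\Rist_G(v_{i-1})$ is Engel. By spherical transitivity of $G$, every $\Rist_G(v)$ with $v \in \LL_n$ is conjugate (hence isomorphic) to $\Rist_G(v_{i-1})$, so each is Engel. Therefore $\Rist_G(n) = \prod_{v \in \LL_n} \Rist_G(v)$, being a finite direct product of Engel groups, is itself Engel, contradicting the hypothesis. The principal obstacle in executing this plan is the commutator computation in the middle step: one must select $x$ with exactly two non-trivial coordinates to isolate a single Engel condition, and carefully absorb the conjugation by $z_i$ (which transports $D_{i-1}$ onto $D_i$) so that the final statement sits purely inside $D_{i-1}$.
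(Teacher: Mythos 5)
Your argument is correct and follows essentially the same route as the paper: reduce to the reduced tree at a non-trivial orbit, write the induced automorphism as $az$, and apply \cref{formula for R(G)} to an element of the base group supported on two cyclically adjacent coordinates --- the paper's $x=r_1(r_2^{-1})^{y^{-1}}$, with $\psi(x)=(a,1,\ldots,1,(b^{-1})^{z_1^{-1}})$, plays exactly the role of your $x$ with $x_{i-1}=u$, $x_i=\tilde w^{z_i}$. The only differences are in packaging (the paper picks a fixed non-Engel pair in one rigid vertex stabilizer and exhibits a single $x$ violating the right Engel property, whereas you let $u,\tilde w$ range over $D_{i-1}$ to conclude that $\Rist_G(n)$ would be Engel) and one harmless inaccuracy: when $\ell=2$ the coordinate other than the $i$-th need not vanish, but your argument only uses the $i$-th coordinate, so nothing is lost.
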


\begin{proof}
Let $f\in G$, $f\ne 1$, and assume by way of contradiction that $f\in \Rc(G)$.
Choose a non-trivial $f$-orbit $\OO=\{v_1,\ldots,v_d\}$, and assume that $f$ permutes cyclically the vertices $v_i$.
Let $\RR=\RR_{\OO}$, $H=G_{\OO}$ and $y=\Phi_{\OO}(f)\in\Rc(H)$.
Then we can write $y=az$, where $a$ is rooted in $\RR$ corresponding to the cycle $(1\ 2\ \ldots\ d)$
and $z$ is in the first level stabilizer.
Write $\psi(z)=(z_1,\ldots,z_d)$.

Let $n$ be the level of $\T$ where $\OO$ lies.
Since $\Phi_{\OO}(\Rist_G(n))\subseteq \Rist_H(1)$ and $\Rist_G(n)$ is not Engel by hypothesis, it follows that $\Rist_H(1)$ is not an Engel group.
If $L$ is the first component of the direct product $\psi(\Rist_H(1))$ then $L$ is not an Engel group either, and we can choose $a,b\in L$ such that $[b,_k a]\ne 1$ for all $k\ge 1$.
Now consider $r_1,r_2\in\Rist_H(1)$ such that
\[
\psi(r_1)=(a,1,\ldots,1)
\qquad
\text{and}
\qquad
\psi(r_2)=(b,1,\ldots,1),
\]
and define $x=r_1(r_2^{-1})^{y^{-1}}$, so that
\[
\psi(x) = (a,1,\ldots,1,(b^{-1})^{z_1^{-1}}).
\]
By applying the formula in \cref{formula for R(G)}, we get
\[
\psi([y,_k x]) = ([b,_{k-1} a]^a, *, \dots, *)
\]
and consequently $[y,_k x]\ne 1$ for all $k\ge 2$.
This is a contradiction, since $y\in\Rc(H)$ and $x\in H$.
\end{proof}

Theorem C can be applied to show that GGS groups have no non-trivial right Engel elements.
We first need to prove the weaker result that they are not Engel groups.

\begin{Lemma}
\label{GGS not Engel}
Let $G$ be a GGS group.
Then $G$ is not an Engel group.
\end{Lemma}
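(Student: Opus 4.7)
To prove that $G$ is not an Engel group I would exhibit elements $x,y\in G$ with $[y,_n x]\neq 1$ for every $n\geq 1$. A natural candidate is a pair of the form $y=a^m$ and $x=b^{\epsilon}$, with $m\in\F_p^{\times}$ and $\epsilon\in\{\pm 1\}$ to be chosen in terms of the exponent vector $\mathbf{e}$. The whole idea is to exploit the self-similarity $\psi(b)=(a^{e_1},\ldots,a^{e_{p-1}},b)$: because the last coordinate of $\psi(b)$ is $b$ itself, an iterated commutator of the chosen shape reproduces (up to conjugation) an iterated commutator of the same shape one level deeper in the tree.

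Concretely, I would apply the recursion formula of \cref{formula for R(G)} to $\psi([a^m,_n b^{\epsilon}])$ for $n\geq 2$. The ``interior'' coordinates, in which both arguments of the inner commutator are powers of $a$, collapse to the identity, and the only surviving information concentrates on the coordinate where $b^{\epsilon}$ sits. At that coordinate the formula yields
\[
\psi([a^m,_n b^{\epsilon}])_{p}
=[a^{-\epsilon e_{p-m}},_{n-1} b^{\epsilon}]^{b^{\epsilon}},
\]
which is an iterated commutator of the same form $[a^{m'},_{n-1}b^{\epsilon}]$ with $m'=-\epsilon e_{p-m}$. This sets up a dynamical system $\phi_{\epsilon}:m\mapsto -\epsilon e_{p-m}$ on $\F_p$, and the recursion produces non-trivial elements for all $n$ as long as the forward orbit of $m$ under $\phi_{\epsilon}$ stays inside $\F_p^{\times}$.

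The argument is then completed by showing that one can always choose $(m,\epsilon)$ so that this orbit avoids $0$ indefinitely. In the generic case, some index $j\in\{1,\ldots,p-1\}$ satisfies $e_j\equiv\pm j\pmod p$, and then $m=p-j$ is a fixed point of $\phi_{\epsilon}$ for an appropriate sign $\epsilon$, giving the clean self-similar recursion $[a^m,_n b^{\epsilon}]\rightsquigarrow [a^m,_{n-1} b^{\epsilon}]$. In the remaining cases one uses the companion recursion at position $1$ (which is non-trivial as soon as a suitable entry of $\mathbf{e}$ is non-zero), or replaces $b$ by a conjugate $b^{a^k}$ so that the self-similar coordinate lands in a more favourable slot. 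Once such a choice has been secured, non-triviality of $[a^m,_n b^{\epsilon}]$ follows by induction on $n$, the base case $[a^m,b^{\epsilon}]\neq 1$ being immediate from the fact that $G$ is non-abelian and $a^m$ is a non-trivial rooted automorphism.

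The main obstacle is the combinatorial step of finding, for an arbitrary non-zero $\mathbf{e}$, a starting pair $(m,\epsilon)$ whose $\phi_{\epsilon}$-orbit stays away from $0$. A small case analysis based on the support of $\mathbf{e}$ (together, if needed, with the parallel recursion at position $1$ or with a suitable conjugate of $b$) handles every possibility, and the uniform underlying principle is simply that the self-similar coordinate of $b$ forces non-triviality to propagate down the tree.
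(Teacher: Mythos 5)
Your core idea is the same as the one that actually works: exploit the self-similar coordinate of $b$ so that an iterated commutator $[a^m,{}_n\,b^{\lambda}]$ reproduces a commutator of the same shape one level down, and your recursion
\[
\psi([a^m,{}_n\, b^{\epsilon}])_{p}=[a^{-\epsilon e_{p-m}},{}_{n-1}\, b^{\epsilon}]^{b^{\epsilon}}
\]
is correct. The genuine gap is your restriction to $\epsilon\in\{\pm 1\}$. With only two signs available, the dynamical system $m\mapsto -\epsilon e_{p-m}$ need not have an orbit avoiding $0$: take $p=5$ and $\mathbf{e}=(2,0,0,0)$. The only admissible starting point is $m=4$ (since $e_{p-m}\ne 0$ forces $p-m=1$), and $\phi_{1}(4)=3$, $\phi_{-1}(4)=2$, both of which land on indices where the exponent vector vanishes, so the orbit dies at the next step for either sign. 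Your ``generic case'' condition $e_j\equiv\pm j$ also fails here, so this example sits squarely in your unresolved ``remaining cases''. The proposed rescues do not work: conjugating $b$ by $a^k$ merely relocates the self-similar coordinate and induces the \emph{same} map on exponents, and the ``companion recursion at position $1$'' changes the second argument of the commutator from $b^{\epsilon}$ to a power of $a$ (the first coordinate of $\psi(b^{\epsilon})$ is $a^{\epsilon e_1}$), so it is not a recursion of the same shape and would need a separate, unproven analysis.

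The fix is simply to allow an arbitrary exponent $\lambda\in\F_p^{\times}$ rather than $\pm 1$: choose any $i$ with $e_{p-i}\ne 0$ (possible since $\mathbf{e}\ne 0$) and set $\lambda=-i\,e_{p-i}^{-1}$, so that the last coordinate of $(b^{-\lambda})^{a^i}$ is exactly $a^{i}$ and $i$ becomes a genuine fixed point of the recursion; then $\psi([a^i,{}_k\, b^{\lambda}])_p$ contains $[a^i,{}_{k-1}\,b^{\lambda}]$ up to conjugacy, and a minimal-counterexample (or induction) argument shows $[a^i,{}_n\, b^{\lambda}]\ne 1$ for all $n$. This is precisely what the paper does, and it removes all case analysis. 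With that single modification your argument closes; as written, it does not cover all GGS groups.
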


\begin{proof}
We show that there is a power of $b$ that is not a left Engel element of $G$.
Let $\mathbf{e}$ be the defining vector of $b$.
Consider any index $i\in\{1,\ldots,p-1\}$ such that $e_{p-i}\ne 0$ in $\F_p$, and choose
$\lambda\in\F_p^{\times}$ such that $\lambda e_{p-i}=-i$.
Then we have
\begin{equation}
\label{psi of b^-1 conjugate}
\psi((b^{-\lambda})^{a^i}) = (\ast,\ldots,\ast,a^i),
\end{equation}
where we use $\ast$ to denote unspecified elements of $G$.

Since $(b^{-\lambda})^{a^i}=[a^i,b^{\lambda}]b^{-\lambda}$, it follows that, for every $k\ge 2$,
\begin{align*}
[(b^{-\lambda})^{a^i},b^{\lambda},\overset{k-1}{\ldots},b^{\lambda}]
&=
[[a^i,b^{\lambda}]b^{-\lambda},b^{\lambda},\overset{k-1}{\ldots},b^{\lambda}]
\\
&=
[[a^i,b^{\lambda},b^{\lambda}]^{b^{-\lambda}},b^{\lambda},\overset{k-2}{\ldots},b^{\lambda}]
\\
&=
[a^i,b^{\lambda},\overset{k}{\ldots},b^{\lambda}]^{b^{-\lambda}}.
\end{align*}
By using \eqref{psi of b^-1 conjugate}, it follows that
\begin{equation}
\label{recursive comm}
\psi([a^i,b^{\lambda},\overset{k}{\ldots},b^{\lambda}]^{b^{-\lambda}})
=
\psi([(b^{-\lambda})^{a^i},b^{\lambda},\overset{k-1}{\ldots},b^{\lambda}])
=
(\ast,\ldots,\ast,[a^i,b^{\lambda},\overset{k-1}{\ldots},b^{\lambda}]).
\end{equation}

Now if $b^{\lambda}$ is a left Engel element of $G$, choose the minimum $k\ge 1$ such that
$[a^i,b^{\lambda},\overset{k}{\ldots},b^{\lambda}]=1$.
Since $a^i$ and $b^{\lambda}$ do not commute, we have $k\ge 2$ and so
$[a^i,b^{\lambda},\overset{k-1}{\ldots},b^{\lambda}]\ne 1$.
According to \eqref{recursive comm}, this is a contradiction.
\end{proof}

\begin{Corollary}
Let $G$ be a GGS group.
Then $\Rc(G)=1$.
\end{Corollary}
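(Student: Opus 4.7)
The proof should proceed by invoking \cref{weakly branch R(G)=1}, so the task is to verify its two hypotheses for a GGS group $G$: that $G$ is weakly branch, and that $\Rist_G(n)$ fails to be an Engel group for every $n\in\N$. The first is standard (GGS groups are branch when the defining vector is non-constant, and still weakly branch in the remaining case when $\mathbf{e}$ is constant, as recorded earlier in the paper), so the entire content of the corollary lies in establishing the second hypothesis.

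The starting point is \cref{GGS not Engel}, which produces \emph{explicit} non-Engel witnesses inside $G$: for a suitable index $i\in\{1,\ldots,p-1\}$ and a suitable scalar $\lambda\in\F_p^\times$, the elements $y=(b^{-\lambda})^{a^i}$ and $x=b^{\lambda}$ of $G$ satisfy $[y,_k x]\neq 1$ for every $k\ge 1$. My plan is to transfer this pair into $\Rist_G(n)$ by exploiting the self-similar structure of $G$. The key geometric input is that, for each vertex $v$ at level $n$, the section map $\psi_v$ restricted to $\Rist_G(v)$ hits a subgroup of $G$ large enough to contain $\langle x,y\rangle$. Granted this, one chooses lifts $\tilde x,\tilde y\in\Rist_G(v)\subseteq\Rist_G(n)$ with $\psi_v(\tilde x)=x$ and $\psi_v(\tilde y)=y$, and then by the homomorphism property
\[
\psi_v([\tilde y,_k \tilde x])=[\psi_v(\tilde y),_k \psi_v(\tilde x)]=[y,_k x]\neq 1
\]
for every $k\ge 1$. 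Hence $[\tilde y,_k \tilde x]\neq 1$ for all $k$, which witnesses that $\Rist_G(n)$ is not an Engel group, and \cref{weakly branch R(G)=1} gives $\Rc(G)=1$.

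The main obstacle is therefore the lifting step: establishing enough ``strong fractality'' of $G$ to realise the specific pair $(x,y)$ inside $\psi_v(\Rist_G(v))$. Note that $a^i\notin G'$, so merely knowing $\psi_v(\Rist_G(v))\supseteq G'$ is insufficient; we genuinely need to engineer rigidly supported elements whose sections recover the rooted-like automorphism $a^i$. The tools for this are the recursion $\psi(b)=(a^{e_1},\ldots,a^{e_{p-1}},b)$ together with the shift formula $\psi(b^{a^j})_k=\psi(b)_{k-j\bmod p}$, from which one assembles products and commutators of conjugates $b^{a^j}$ that live inside $\Rist_G(v)$ and whose $v$-sections exhaust the subgroup $\langle a^i,b^{\lambda}\rangle$ of $G$. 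For branch GGS groups this recovers the desired sections from standard commutator identities, while in the weakly branch case with constant defining vector one argues with the analogous direct construction in the rigid stabilizers. This structural step is where the technical weight of the corollary lies; once it is in place, the remainder of the argument is the clean descent through $\psi_v$ described above.
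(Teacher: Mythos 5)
Your overall strategy --- reduce to \cref{weakly branch R(G)=1} and import the non-Engel witnesses of \cref{GGS not Engel} into the rigid stabilizers --- is the same as the paper's, but the lifting step as you describe it is blocked, not merely technical. You need $\psi_v(\Rist_G(v))$ to contain $\langle x,y\rangle$ with $x=b^{\lambda}$, and $b^{\lambda}\notin G'$; however, the section at $v$ of a \emph{rigidly supported} element of a GGS group is genuinely constrained modulo $G'$. For instance, in the Gupta--Sidki $3$-group the homomorphisms $\theta_u\colon\St_G(1)\to G/G'$, $g\mapsto \overline{\psi_u(g)}$, evaluated on the generators $b,b^a,b^{a^2}$ of $\St_G(1)$, show that any $g\in\Rist_G(v)$ (where $\theta_u(g)=1$ for the two first-level vertices $u\ne v$) must lie in $\St_G(1)'$, whence $\psi_v(\Rist_G(v))=G'$ exactly. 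So neither $b^{\lambda}$ nor $(b^{-\lambda})^{a^i}$ has a preimage in $\Rist_G(v)$, and no assembly of products and commutators of the conjugates $b^{a^j}$ can produce one: the obstruction lives in $G/G'$. The paper sidesteps this by running the section map in the \emph{forward} direction rather than looking for preimages: it takes $K=G'$ (or $\gamma_3(G)$ in the symmetric case), notes that $G$ is regular branch over $K$ so that $\Rist_G(n)$ contains an isomorphic copy of $K$, and then computes that $\psi_v(K)=G$ for a suitable first-level vertex $v$ --- crucially, $\psi_v$ is here applied to all of $K\le\St_G(1)$, not only to its rigidly supported part. Since homomorphic images of Engel groups are Engel and $G$ is not Engel by \cref{GGS not Engel}, $K$ is not Engel, hence neither is $\Rist_G(n)$. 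If you wish to keep the ``lifting'' phrasing, you must lift a non-Engel pair of $K$ (namely $\psi_v$-preimages in $K$ of the witnesses in $G$), not a non-Engel pair of $G$.

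A second gap is the constant-vector case. There $G$ is only weakly branch, the regular-branch structure over $G'$ or $\gamma_3(G)$ is unavailable, and your ``analogous direct construction in the rigid stabilizers'' is unjustified. The paper does not use \cref{weakly branch R(G)=1} at all in this case: for constant $\mathbf{e}$ one has $\Lc(G)=1$ by \cite[Theorem 7]{albert}, and the inclusion $\Rc(G)\subseteq\Lc(G)^{-1}$ then gives $\Rc(G)=1$ immediately.
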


\begin{proof}
If the defining vector $\mathbf{e}$ is constant, then $\Lc(G)=1$ by Theorem 7 of \cite{albert}, and consequently also $\Rc(G)=1$.
Thus in the remainder we assume that $\mathbf{e}$ is not constant.
By Lemmas 3.2 and 3.4 of \cite{Gustavo}, we know that $G$ is regular branch over $K$, where $K=\gamma_3(G)$ if $\mathbf{e}$ is symmetric and $K=G'$ otherwise.
Since $\Rist_G(n)$ contains a copy of $K\times \overset{p^n}{\cdots} \times K$ for every $n\in\N$, if we prove that $K$ is not Engel then Theorem C applies to conclude that $\Rc(G)=1$.

In order to show that $K$ is not Engel, we are going to find a vertex $v$ of the first level of the tree such that $\psi_v(K)=G$.
Since $G$ is not Engel by \cref{GGS not Engel}, it follows that $K$ is not Engel either, as desired.

We consider separately the cases when $\mathbf{e}$ is symmetric and non-symmetric.
Assume first that $\mathbf{e}$ is non-symmetric, so that $K=G'$.
We have
\[
\psi([b,a])
=
(a^{-e_1}b,a^{e_1-e_2},a^{e_2-e_3},\ldots,a^{e_{p-2}-e_{p-1}},b^{-1}a^{e_{p-1}}).
\]
Since $\mathbf{e}$ is not constant, there exists $i\in\{1,\ldots,p-2\}$ such that $e_i\ne e_{i+1}$ in $\F_p$.
If $v$ is the vertex $i+1$ on the first level of the tree, then
\[
\psi_v([b,a]) = a^{e_i-e_{i+1}}
\qquad
\text{and}
\qquad
\psi_v([b,a]^{a^i}) = a^{-e_1}b.
\]
Since the subgroup $\langle a^{e_i-e_{i+1}}, a^{-e_1}b \rangle$ coincides with $G$, we get the desired equality $\psi_v(G')=G$.

Now let $\mathbf{e}$ be symmetric, i.e.\ such that $e_i=e_{p-i}$ for all $i=1,\ldots,p-1$.
Since $\mathbf{e}$ is not constant, this implies that $p\ge 5$.
We have
\[
\begin{split}
\psi([b,a,a])
&=
\psi([b,a]^{-1}) \psi([b,a]^a)
\\
&=
(b^{-1}a^{e_1}b^{-1}a^{e_{p-1}}, a^{-2e_1+e_2}b, a^{e_1-2e_2+e_3}, \ldots,
\\
&\qquad\qquad\qquad\qquad\qquad\qquad\qquad
a^{e_{p-3}-2e_{p-2}+e_{p-1}}, a^{-e_{p-1}}ba^{e_{p-2}-e_{p-1}}).
\end{split}
\]
If $e_i-2e_{i+1}+e_{i+2}\ne 0$ for some $i\in\{1,\ldots,p-3\}$, we have a non-trivial power of $a$ in one of the components of $\psi([b,a,a])$ and we can argue as above to prove that $\psi_v(\gamma_3(G))=G$ for a vertex $v$ in the first level.
On the other hand, if $e_i-2e_{i+1}+e_{i+2}=0$ for all $i=1,\ldots,p-3$, then
\[
\begin{split}
e_{3}&=2e_{2}-e_{1},
\\
e_{4}&=2e_{3}-e_{2}=3e_{2}-2e_{1},
\\
&\,\,\,\vdots
\\
e_{p-1}&=2e_{p-2}-e_{p-3}=(p-2)e_{2}-(p-3)e_{1}.
\end{split}
\]
Since $e_{p-1}=e_1$, the last equation implies that $e_1=e_2$, and then using all other equations, we get that all components $e_i$ are equal to $e_1$.
Thus the vector $\mathbf{e}$ is constant, which is a contradiction.
\end{proof}

\subsection*{Acknowledgements}
The authors want to thank A.\ Tortora and G.\ Traustason for helpful discussions.

\bibliography{bib}

\end{document}